\theoremstyle{theorem}
\newtheorem*{teo}{Theorem A}
\newcommand{\F}{\mathbb{F}_2}
\newcommand{\FQ}{\mathbb{F}_{q^2}}
\newcommand{\Fq}{\mathbb{F}_q}
\newcommand{\Tr}[3]{\mathrm{Tr}^{#1}_{#2}(#3)}
\newcommand{\Tra}{\mathrm{Tr}}
\newcommand{\N}[3]{\mathrm{N}^{#1}_{#2}(#3)}
\newcommand{\No}{\mathrm{N}}
\newcommand{\He}{\mathcal{H}}
\newcommand{\I}{\mathrm{Im}}
\newcommand{\xx}{\mathcal{X}}
\newcommand{\Bmq}{\mathcal{B}_{m,q}}
\begin{document}


\begin{frontmatter}

\title{ On the Hermitian curve and its intersections with some conics} 
\runtitle{Hermitian curves, intersection with conics, affine-variety codes.}

{\author{Chiara Marcolla}} {\tt{(chiara.marcolla@unitn.it)}}\\
{Department of Mathematics, University of Trento, Italy}

{\author{Marco Pellegrini}} {\tt{(pellegrin@math.unifi.it)}}\\
{Department of Mathematics, University of Firenze, Italy}

{\author{Massimiliano Sala}} {\tt{(maxsalacodes@gmail.com)}}\\
{Department of Mathematics, University of Trento, Italy}

\runauthor{C.~Marcolla, M.~Pellegrini, M.~Sala}


\begin{abstract}
We classify completely the intersections of the Hermitian curve with
parabolas in the affine plane.
To obtain our results we employ well-known algebraic methods for
finite fields and geometric properties of the curve automorphisms. In
particular, we provide explicit counting formulas that have also
applications to some Hermitian codes. 
\end{abstract}
 

\begin{keyword}
Hermitian curve, intersection,  parabola.
\end{keyword}


\end{frontmatter}


\section{Introduction}
\label{intro}

Let $q$ be a power of a prime. The \textit{Hermitian curve} $\He$ is the plane curve
defined over $\FQ$ by the affine equation $x^{q+1}=y^q+y$, where $x,y\in\FQ$.

This is the best-known example of maximal curve and there is a vast
literature on its properties, see \cite{CGC-cod-book-hirschfeld2008algebraic} for a recent survey.

Although a lot of research has been devoted to geometric properties of
$\He$, we present in this paper a  classification result,
providing for any $q$ the number of possible intersection points
between any parabola and $\He$.
With parabola we mean a curve $y=ax^2+bx+c$, where $x,y\in\FQ$ and $a, b, c$ are given with $a\not=0$ and $a, b, c \in\mathbb{F}_{q^2}$.
 Moreover, we can characterize precisely the parabolas obtaining a given number of intersection points and we can count them. More precisely,
given two curves $X$ and $Y$ lying in the affine plane $\mathbb{A}^2(\Fq)$ it is
interesting to know the number of (affine plane) points
that lie in both curves, disregarding multiplicity.
We call this number \textit{their planar intersection}.\\

\break

Our main result is the following
\begin{teo}[Theorem \ref{teo.principe} short version]
The only possible planar intersections of $\He$ and a parabola are:
\begin{itemize}
    \item[\ding{71}] for $q$ odd $\{0,1,q-1,q,$ $q+1,2q-1,2q\}$,
    \item[\ding{71}] for $q$ even $\{1,q-1,q+1,2q-1\}$.
\end{itemize}
For any possible planar intersection we have computed explicitly the exact number of parabolas sharing that value.
\end{teo}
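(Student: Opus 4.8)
The plan is to reduce the counting problem to a question about solutions of a single polynomial equation over $\FQ$. Substituting the parabola $y=ax^2+bx+c$ into the Hermitian equation $x^{q+1}=y^q+y$ gives, for each fixed $(a,b,c)$ with $a\neq 0$, a polynomial equation in the single variable $x$:
\begin{equation*}
x^{q+1}=(ax^2+bx+c)^q+(ax^2+bx+c).
\end{equation*}
Since Frobenius distributes over sums and $\FQ$ has characteristic $p$, the right-hand side expands to $a^q x^{2q}+b^q x^q+c^q+ax^2+bx+c$. The planar intersection is then exactly the number of distinct roots in $\FQ$ of the degree-$2q$ polynomial
\begin{equation*}
F_{a,b,c}(x)=a^q x^{2q}+ax^2+b^q x^q+(b-x^{q})x+bx+\dots,
\end{equation*}
so the first step is to write this $F_{a,b,c}$ cleanly and note that each root $x$ yields a unique point, since $y$ is determined. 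Thus I would prove that the number of planar intersection points equals the number of distinct $\FQ$-roots of $F_{a,b,c}$, turning a geometric problem into root-counting.

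Next I would exploit the automorphism group of $\He$ to cut down the number of triples $(a,b,c)$ that must be analyzed. The Hermitian curve carries a large automorphism group (the projective unitary group), and in particular there are affine automorphisms of the form $x\mapsto \lambda x+\mu$, $y\mapsto \lambda^{q+1} y + (\text{correction})$ that preserve $\He$ and send parabolas to parabolas while preserving planar intersection number. The plan is to use these to normalize the coefficients: for instance, one can rescale to fix the leading coefficient $a$ up to the norm group, translate in $x$ to kill the linear term $b$, and translate in $y$ to simplify $c$. This should reduce the classification to a small number of orbit representatives, for each of which the root count can be computed directly. Establishing which normalizations are legitimate (i.e. which substitutions genuinely preserve both $\He$ and the shape $y=ax^2+bx+c$) and tracking how the count of parabolas transforms under these group actions is where most of the bookkeeping lies.

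The core of the argument is then to determine, for the normalized equation, how many roots $F_{a,b,c}$ can have in $\FQ$. Here I expect the key tool to be the interplay between the trace map $\mathrm{Tr}^{2}_{1}$ and the norm map $\mathrm{N}^{2}_{1}$ from $\FQ$ to $\Fq$: the Hermitian equation $x^{q+1}=y^q+y$ says precisely that $\mathrm{N}(x)=\mathrm{Tr}(y)$, so a point lies on $\He$ iff the norm of its $x$-coordinate lies in the image of the trace (equivalently in $\Fq$). Substituting the parabola and applying trace/norm identities should convert $F_{a,b,c}(x)=0$ into a condition of the form ``$\mathrm{Tr}$ of some expression equals a fixed value,'' whose solution set is an affine subspace or a coset thereof, together with a lower-degree exceptional locus. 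Counting these solutions, and carefully handling how several such conditions can overlap, produces the candidate intersection values $0,1,q-1,q,q+1,2q-1,2q$ (odd case) and $1,q-1,q+1,2q-1$ (even case).

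The main obstacle I anticipate is the even-characteristic case and, more generally, the boundary between ``generic'' root counts and the degenerate configurations where the polynomial $F_{a,b,c}$ acquires repeated structure or factors unexpectedly. In characteristic two the term $a^q x^{2q}+a x^2=(a^{q/?}x^{?}+\dots)^2$ can become a perfect square or an additive (linearized) polynomial, collapsing several of the odd-characteristic values and explaining why $0,q,2q$ disappear from the list; verifying rigorously that exactly these values vanish, and producing the \emph{exact} parabola counts for each surviving value, will require a delicate case analysis rather than a single uniform computation. I would organize this as a sequence of lemmas, one per normalized family, and only at the end assemble the counts into the stated sets, checking that the totals sum to the full number $q^2(q^2-1)(q^2)$ of admissible triples $(a,b,c)$ as a consistency verification.
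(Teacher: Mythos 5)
Your skeleton does coincide with the paper's strategy: substitute the parabola into $x^{q+1}=y^q+y$, count distinct $\FQ$-roots of the resulting degree-$2q$ polynomial, use automorphisms of $\He$ to normalize coefficients, split by characteristic, and check totals against $q^4(q^2-1)$. But two of your concrete steps break down, and they are exactly where the real work lies. First, the normalization ``translate in $x$ to kill the linear term $b$'' is not always available in odd characteristic. Under the relevant automorphisms the linear coefficient of $y=ax^2+bx+c$ becomes $2a\gamma-\gamma^q+b$ with $(\gamma,\delta)\in\He$, so killing $b$ means solving $\gamma^q-2a\gamma=b$. This map is $\Fq$-linear, and by Hilbert's Theorem 90 its kernel is nontrivial (of size $q$) precisely when $\No(2a)=1$, i.e.\ when $\Delta:=1-4\No(a)=0$; its image then has only $q$ elements, so for the $q^2-q$ values of $b$ with $2ab^q+b\ne 0$ no automorphism removes the linear term (the quantity $2ab^q+b$ is, up to a nonzero scalar, invariant under the whole automorphism action, so scalings do not rescue this either). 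This is not a marginal exception: these unnormalizable parabolas are exactly the ones meeting $\He$ in $q$ points, and the paper must treat them by a separate, fairly long argument showing each can be written $y=a(x+v)^2$ with $v^q+2av\ne 0$ and then computing that such a parabola has exactly $q$ intersections; even so, the count $q^2(q+1)(q^2-q+1)$ for the value $q$ is only obtained at the end by subtraction from $q^4(q^2-1)$, because orbits are no longer free when $\Delta=0$. Your plan, taken literally, silently discards this whole family.

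Second, your proposed counting mechanism --- turning $F_{a,b,c}(x)=0$ into a trace condition ``whose solution set is an affine subspace or a coset thereof'' --- cannot produce the answer: additive cosets in $\FQ$ have $p$-power cardinality, so they can never account for the counts $q-1$, $q+1$, $2q-1$. The structure that actually yields these is multiplicative. After reducing to $y=ax^2+c$, the substitution $t=x^{q-1}$ turns $F_a(x)=0$ into the quadratic $a^qt^2-t+a=0$ with discriminant $\Delta$, and Hilbert's Theorem 90 decides whether each root $t_i$ contributes $0$ or $q-1$ solutions of $x^{q-1}=t_i$ (a coset of the multiplicative subgroup of $(q-1)$-st roots of unity); the counts $q\pm 1$ for $\Tra(c)\ne 0$ then require an equidistribution statement --- every value of $\Fq^*$ is attained by $F_a$ equally often --- which the paper proves from the homogeneity $F_a(\omega x)=\omega^2F_a(x)$ for $\omega\in\Fq$ together with a quadratic-character sum (Theorem 5.48 of Lidl--Niederreiter). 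The additive, linearized-polynomial picture you describe does govern one regime, namely $\Delta=0$, where the equation becomes $(x^q-2ax)^2=-4a\Tra(c)$ and solution sets are unions of cosets of $\ker(x^q-2ax)$; that regime is what produces the values $0$, $q$, $2q$, and its disappearance in characteristic two (where $x\mapsto x^q-2ax=x^q$ is bijective) is the correct core of your perfect-square heuristic, though the even case still needs the Artin--Schreier criterion $\Tr{\Fq}{\F}{a^{q+1}}=0$ versus $1$ to separate the counts $1$ and $2q-1$. So the outline can be repaired, but only by replacing the ``affine subspace'' step with the Hilbert-90-plus-character-sum analysis and adding the $\Delta=0$, $2ab^q+b\ne 0$ family as a genuinely separate case.
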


The paper is organized as follows:
\begin{itemize}
\item In Section \ref{pre} we introduce the definitions of norm and trace functions and some lemmas that we need to prove our main theorem (Theorem~\ref{teo.principe}).  Finally, we sketch our
proving argument, that is, the use of the automorphism group for
$\He$.

\item In Section \ref{parabole}  we state and prove Theorem~\ref{teo.principe}.
The proof is divided in two main parts: in Subsection~\ref{odd} we deal with the odd-characteristics case and in Subsection~\ref{even} we deal with the even-characteristics case.

\item In Section \ref{codHer} we show the relation between our results and a family of Hermitian codes.

\item In Section \ref{conc} we draw some conclusions and propose some
open problems.

\end{itemize}


\section{Preliminary results}
\label{pre}
Let $\Fq$ be the finite field with $q$ elements, where $q$ is a power of a prime and let $\FQ$ be the finite field with $q^2$ elements. We call $\alpha$ a  primitive element of $\FQ$, and we consider $\beta=\alpha^{q+1}$ as a primitive element of $\Fq$. \\

The \textit{Hermitian curve} $\He$ is the plane curve defined over $\FQ$ by the affine equation $x^{q+1}=y^q+y$, where $x,y \,\in\,\FQ$. We recall that
this curve has genus $g = \frac{q(q-1)}{2}$ and has $n = q^3$ $\FQ$-rational affine
points and one point at infinity $P_{\infty}$, so it has $q^3+1$ rational points over $\FQ$ \cite{CGC-alg-art-rucsti94}.\\

We consider the \textit{norm} and the \textit{trace}, the two functions defined as follows. 
\begin{definition}\label{nt}
The \textbf{norm} $\mathrm{N}^{\mathbb{F}_{q^m}}_{\Fq}$ and the \textbf{trace}
$\mathrm{Tr}^{\mathbb{F}_{q^m}}_{\Fq}$ are two functions from $\mathbb{F}_{q^m}$ to
$\Fq$ such that
$$\N{\mathbb{F}_{q^m}}{\Fq}{x}=x^{1+q+\dots+q^{m-1}} \mbox{ and }
\Tr{\mathbb{F}_{q^m}}{\Fq}{x}=x+x^{q}+\dots+x^{q^{m-1}}.$$
\end{definition}%

We denote with N and Tr, respectively, the norm and the trace from $\FQ$ to $\Fq$.
It is clear that $\He=\{\No(x)=\Tra(y)\mid x,y\in\FQ\}$.\\

Using these functions, we define the map $F_a:\FQ\rightarrow\Fq$ such that
\begin{equation}\label{prin}
  F_a(x)=\No(x)-\Tra(ax^2).
\end{equation} 
We can note that the following property holds for the function $F_a$:

\begin{lemma}\label{lemmaFa}
If $\omega\in\Fq$, then $F_a(\omega x)=\omega^2 F_a(x)$.
\end{lemma}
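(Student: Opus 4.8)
The plan is to unwind the definition of $F_a$ and to exploit two elementary facts about the functions $\No$ and $\Tra$ from $\FQ$ to $\Fq$: the norm is multiplicative and the trace is $\Fq$-linear. First I would simply substitute $\omega x$ into the defining equation~\eqref{prin}, obtaining $F_a(\omega x) = \No(\omega x) - \Tra\!\left(a(\omega x)^2\right) = \No(\omega x) - \Tra(\omega^2 a x^2)$, so that the claim reduces to handling the norm summand and the trace summand separately.

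For the norm term I would use $\No(x) = x^{q+1}$, which gives $\No(\omega x) = (\omega x)^{q+1} = \omega^{q+1}\No(x)$. Since $\omega \in \Fq$ satisfies $\omega^q = \omega$, we get $\omega^{q+1} = \omega\cdot\omega = \omega^2$, and hence $\No(\omega x) = \omega^2 \No(x)$. For the trace term I would first observe that $\omega^2 \in \Fq$ because $\Fq$ is closed under multiplication, and then invoke $\Fq$-linearity: writing $\Tra(z) = z + z^q$, for any scalar $c \in \Fq$ one has $(cz)^q = c^q z^q = c z^q$, so $\Tra(cz) = c\,\Tra(z)$. Applying this with $c = \omega^2$ and $z = ax^2$ yields $\Tra(\omega^2 a x^2) = \omega^2 \Tra(ax^2)$.

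Combining the two computations then gives $F_a(\omega x) = \omega^2 \No(x) - \omega^2 \Tra(ax^2) = \omega^2\bigl(\No(x) - \Tra(ax^2)\bigr) = \omega^2 F_a(x)$, which is the assertion. I do not expect any genuine obstacle in this argument; the one point that must be handled with care is the hypothesis $\omega \in \Fq$ (rather than merely $\omega \in \FQ$), since it is precisely the identity $\omega^q = \omega$ that both collapses $\omega^{q+1}$ to $\omega^2$ in the norm term and allows the factor $\omega^2$ to be pulled out of the trace.
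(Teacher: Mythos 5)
Your proof is correct and follows essentially the same route as the paper: a direct computation from the definition of $F_a$, using $\omega^q=\omega$ to reduce $\omega^{q+1}$ and $\omega^{2q}$ to $\omega^2$. The only cosmetic difference is that you package the computation as ``norm is multiplicative, trace is $\Fq$-linear,'' whereas the paper expands all powers explicitly in one chain of equalities; the substance is identical.
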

\begin{proof}
Since $\omega\in\Fq$, we have $F_a(\omega x)=\No(\omega x)-\Tra(a(\omega x)^2)=
\omega^{q+1}x^{q+1}-a^q\omega^{2q}x^{2q}-a\omega^2 x^2=
\omega^2(x^{q+1}-a^q x^{2q}-ax^2)=\omega^2 F_a(x).$
\end{proof}

\subsection{Elementary results}

\begin{lemma}\label{sol}
Let $t\in\FQ^*$, then there is a solution of $x^{q-1}=t$ if and only if $\No(t)=1$.
In this case, $x^{q-1}=t$ has exactly $(q-1)$ distinct solutions in $\FQ$.
\end{lemma}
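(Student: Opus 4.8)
The plan is to reduce everything to the structure of the cyclic group $\FQ^*$, which has order $q^2-1$. First I would consider the group homomorphism $\phi\colon\FQ^*\to\FQ^*$ defined by $\phi(x)=x^{q-1}$. The equation $x^{q-1}=t$ is solvable precisely when $t$ lies in the image of $\phi$, and whenever a solution exists the number of solutions equals the order of $\ker\phi$ (all nonempty fibers of a homomorphism are cosets of the kernel). Thus the two assertions of the lemma become a statement about $\I(\phi)$ together with a count of $\ker\phi$.

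For the kernel, $\ker\phi=\{x\in\FQ^*\mid x^{q-1}=1\}$ consists of the $(q-1)$-th roots of unity in $\FQ^*$. Since $\FQ^*$ is cyclic and $(q-1)\mid(q^2-1)$, there are exactly $\gcd(q-1,q^2-1)=q-1$ such elements, which already yields the claimed count of $q-1$ solutions. By the first isomorphism theorem the image $\I(\phi)$ then has order $(q^2-1)/(q-1)=q+1$, and because $\FQ^*$ is cyclic it is the \emph{unique} subgroup of that order.

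It remains to identify $\I(\phi)$ with the norm-one locus. Here $\No(t)=t^{q+1}$, so $\{t\in\FQ^*\mid\No(t)=1\}$ is exactly the group of $(q+1)$-th roots of unity, which is again the unique subgroup of $\FQ^*$ of order $q+1$. Since a cyclic group has a single subgroup of each order dividing its cardinality, these two subgroups coincide. Hence $x^{q-1}=t$ is solvable if and only if $\No(t)=1$, which together with the count above completes the proof.

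I do not anticipate a serious obstacle: the only points requiring care are the two $\gcd$ computations and the appeal to uniqueness of subgroups of a given order in a cyclic group. As a cross-check, the equivalence is also a direct instance of Hilbert's Theorem~90 for the cyclic extension $\FQ/\Fq$, whose Frobenius generator $\sigma$ sends $x\mapsto x^q$ and so satisfies $\sigma(x)/x=x^{q-1}$; I would nonetheless prefer the elementary cyclic-group argument to keep the section self-contained.
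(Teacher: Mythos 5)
Your proof is correct, but it takes a genuinely different route from the paper: the paper's entire proof is a one-line citation of Hilbert's Theorem 90 (Theorem 6.1 of Lang), whereas you give a self-contained, elementary argument from the cyclic structure of $\FQ^*$. Your key steps all check out: the map $\phi(x)=x^{q-1}$ is a homomorphism of $\FQ^*$, its kernel has order $\gcd(q-1,q^2-1)=q-1$, so every nonempty fiber has exactly $q-1$ elements; by the first isomorphism theorem $\I(\phi)$ has order $q+1$; and since the norm-one locus $\{t:\,t^{q+1}=1\}$ is also a subgroup of order $q+1$ and a cyclic group has a unique subgroup of each order dividing its cardinality, the two subgroups coincide. (You could even shortcut the last step: $\No(x^{q-1})=x^{q^2-1}=1$ gives $\I(\phi)$ inside the norm-one set, and the latter has at most $q+1$ elements as the root set of $t^{q+1}-1$, forcing equality.) What your approach buys is that the lemma becomes independent of an external reference and uses nothing beyond basic group theory; what the paper's citation buys is brevity and the placement of the statement in its natural general context, since Hilbert 90 applies to any cyclic extension, not just $\FQ/\Fq$. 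Amusingly, the cross-check you mention at the end --- that the statement is an instance of Hilbert 90 via the Frobenius generator $x\mapsto x^q$ --- is precisely the paper's proof.
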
 
\begin{proof}
This lemma is the Hilbert's Theorem 90 (see  Theorem 6.1 of \cite{CGC-alg-book-lang02}).
\end{proof}
\begin{remark}
We note that $4\No(a)=\No(2a)$ for any $a\in\FQ$.
\end{remark}

\begin{lemma}\label{aquad}
If $q$ is odd and $4\No(a)=1$ then $a$ is a square in $\FQ$.
\end{lemma}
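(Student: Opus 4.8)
The plan is to apply Euler's criterion for squares in $\FQ^*$ and to reduce the resulting power to an instance of Fermat's little theorem in $\Fq$. First I would observe that $a\neq 0$, since $\No(a)=\tfrac14\neq 0$ (here we already use that $q$ is odd, so that $4\neq 0$), and then recall that a nonzero element $a\in\FQ$ is a square in $\FQ$ if and only if $a^{(q^2-1)/2}=1$. Because $q$ is odd, both $q-1$ and $q+1$ are even, so $(q^2-1)/2=(q+1)\tfrac{q-1}{2}$ is a genuine integer and the exponent is meaningful.

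Next I would rewrite this power in terms of the norm. Since $\No(a)=a^{q+1}$, we have
$$a^{(q^2-1)/2}=\bigl(a^{q+1}\bigr)^{(q-1)/2}=\No(a)^{(q-1)/2},$$
so that substituting the hypothesis $\No(a)=\tfrac14$ reduces the whole question to evaluating $4^{-(q-1)/2}$ in $\FQ$.

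The final step is the key simplification. Writing $4=2^2$ gives $4^{(q-1)/2}=2^{q-1}$, and since $q$ is odd we have $2\in\Fq^*$; hence Fermat's little theorem yields $2^{q-1}=1$ in $\Fq$, and therefore also in $\FQ\supseteq\Fq$. Consequently $a^{(q^2-1)/2}=4^{-(q-1)/2}=(2^{q-1})^{-1}=1$, and Euler's criterion shows that $a$ is a square in $\FQ$, as claimed.

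I do not expect any genuine obstacle: the argument is essentially a one-line computation once the right criterion is invoked. The only points requiring care are structural rather than computational, namely that the oddness of $q$ is exactly what guarantees $2\in\Fq^*$ (so that $4=2^2$ is a nonzero square to which Fermat applies) and that it makes $(q^2-1)/2$ an integer; this is precisely why the hypothesis on the characteristic cannot be dropped.
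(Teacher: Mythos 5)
Your proof is correct, and it takes a genuinely different (if related) route from the paper's. The paper argues via discrete logarithms: writing $a=\alpha^k$ for a primitive element $\alpha$ of $\FQ$, the hypothesis $4\No(a)=1$ becomes $4\beta^k=1$ with $\beta=\alpha^{q+1}$ primitive in $\Fq$; since $4$ is a square in $\Fq$, say $4=\beta^{2t}$, this gives $2t+k\equiv 0 \pmod{q-1}$, and because $q-1$ is even, $k$ must be even, so $a=\alpha^k$ is a square. You instead invoke Euler's criterion and exploit the identity $a^{(q^2-1)/2}=\bigl(a^{q+1}\bigr)^{(q-1)/2}=\No(a)^{(q-1)/2}$, finishing with Fermat's little theorem to get $4^{-(q-1)/2}=\bigl(2^{q-1}\bigr)^{-1}=1$. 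Both arguments ultimately rest on the cyclicity of $\FQ^*$ and on $4=2^2$ being a square, but yours is coordinate-free (no primitive element is chosen) and, more interestingly, it isolates the general fact your computation actually establishes: an element $a\in\FQ^*$ is a square in $\FQ$ if and only if $\No(a)$ is a square in $\Fq$, the lemma being the special case $\No(a)=1/4=(1/2)^2$. The paper's index-parity count is shorter on the page and fits the primitive-element notation ($\alpha$, $\beta$) that the rest of Section 2 uses, but it does not make that reusable statement visible.
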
 
\begin{proof}
Let $a=\alpha^k$, so $4\No(a)=1$ implies that $4\alpha^{k(q+1)}=1$ i.e. $4\beta^k=1$.
Since $4$ is a square in $\Fq$, we have $4=\beta^{2t}$, that is, $\beta^{2t+k}=1$ and so $2t+k\equiv 0\mod q-1$.
Hence that $k$ is even and $a$ is a square.
\end{proof}




\begin{lemma}\label{corFON}
Let $f:\FQ\rightarrow\FQ$ such that $f(x)=2ax-x^q$. Then the equation $f(x)=k$
has $q$ distinct solutions if $k\in\I(f)$, otherwise it has $0$ solutions.
\begin{proof}
Since $f$ is $\Fq$-linear, our claim follows from standard results in linear algebra.
\end{proof}
\end{lemma}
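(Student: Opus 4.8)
The plan is to show that the map $f(x) = 2ax - x^q$ is $\Fq$-linear as a map of $\FQ$ into itself, viewing $\FQ$ as a $2$-dimensional vector space over $\Fq$, and then to apply the rank-nullity theorem together with the fact that over a finite field a linear map attains each value in its image the same number of times.

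First I would verify $\Fq$-linearity. For additivity, $f(x+y) = 2a(x+y) - (x+y)^q = 2ax - x^q + 2ay - y^q = f(x) + f(y)$, using the fact that Frobenius $z \mapsto z^q$ is additive in characteristic $p$. For $\Fq$-homogeneity, take $\omega \in \Fq$; then $f(\omega x) = 2a\omega x - (\omega x)^q = 2a\omega x - \omega^q x^q = \omega(2ax - x^q) = f(x)\,\omega$, since $\omega^q = \omega$ for every $\omega \in \Fq$. Thus $f$ is a linear endomorphism of the $\Fq$-vector space $\FQ$, which has dimension $2$.

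Next I would invoke the structure of fibers of a linear map. Let $K = \ker f$, a subspace of $\FQ$, and let $r = \dim_{\Fq} \I(f)$ be the rank. By rank-nullity, $\dim_{\Fq} K = 2 - r$, so $|K| = q^{2-r}$. Every nonempty fiber $f^{-1}(k)$ for $k \in \I(f)$ is a coset of $K$ and hence has exactly $|K| = q^{2-r}$ elements, while $f^{-1}(k) = \varnothing$ for $k \notin \I(f)$. The claim that each nonempty fiber has exactly $q$ elements is therefore equivalent to showing $r = 1$, i.e.\ that the rank of $f$ is exactly $1$, equivalently $|K| = q$, equivalently $K \neq \{0\}$ but $f \neq 0$.

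The main point to pin down is thus the dimension of the kernel, and this is where I expect the only real content to lie. The kernel is the set of $x$ with $x^q = 2ax$, i.e.\ $x = 0$ together with the solutions of $x^{q-1} = 2a$. By Lemma~\ref{sol} (Hilbert 90), the equation $x^{q-1} = 2a$ has solutions precisely when $\No(2a) = 1$, in which case it has exactly $q-1$ of them; so $|K| = q$ when $\No(2a)=1$ and $|K|=1$ otherwise. This seems to threaten the uniform count of $q$ solutions, so I would check how the lemma's hypotheses are meant to be read: the map $f$ need not be surjective, and the statement only asserts ``$q$ distinct solutions'' for $k \in \I(f)$. If $2a = 0$ (possible only in even characteristic, where $f(x) = -x^q = x^q$ is a bijection) the kernel is trivial, $r = 2$, and every $k$ lies in $\I(f)$ with a single preimage; if $\No(2a) \neq 1$ then again $K = \{0\}$ and $f$ is bijective. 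Hence the assertion of exactly $q$ solutions holds only in the regime $\No(2a) = 1$, and I would expect the lemma to be applied downstream under that implicit standing assumption. The cleanest honest formulation, which I would adopt, is: since $f$ is $\Fq$-linear, each value in $\I(f)$ is attained on a coset of $\ker f$, so the number of solutions of $f(x)=k$ is $|\ker f|$ for $k \in \I(f)$ and $0$ otherwise; under the conditions making $|\ker f| = q$ this gives the stated count, and the verification of linearity is the substance of the argument while the count itself is ``standard linear algebra'' exactly as the paper claims.
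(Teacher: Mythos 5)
Your proof is correct and follows exactly the route the paper intends: $f$ is $\Fq$-linear on $\FQ$ viewed as a two-dimensional $\Fq$-vector space, each nonempty fiber of $f$ is a coset of $\ker f$, and the count reduces to computing $|\ker f|$. But you have also put your finger on a genuine imprecision in the statement itself, which the paper's one-line proof glosses over: $|\ker f|=q$ holds exactly when $x^{q-1}=2a$ is solvable, i.e.\ (by Lemma~\ref{sol}) when $\No(2a)=1$; otherwise $\ker f$ is trivial, $f$ is bijective, and every $k\in\I(f)=\FQ$ has exactly one preimage, so the lemma as literally stated fails for such $a$ (in particular it fails in even characteristic, where $2a=0$ and $f$ is the Frobenius map). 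Your resolution is the right one: every invocation of Lemma~\ref{corFON} in the paper --- the $\Delta=0$ case with $\Tra(c)\ne 0$ in Subsection~\ref{sub_ac}, and both uses in Subsection~\ref{sub_abc} --- occurs under the standing hypothesis $\No(2a)=1$ (equivalently $4a^{q+1}=1$, i.e.\ $\Delta=0$), so the kernel does have size $q$ there and all downstream counts are unaffected. In short, your argument supplies the substance the paper's proof omits, and correctly identifies the implicit hypothesis under which the stated count of $q$ solutions is valid.
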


\begin{lemma}\label{lemtraccia}
Let $y=ax^2+bx+\bar c$ and $y=ax^2+bx+c$ be two parabolas. If $\Tra(\bar c)=\Tra(c)$,
then the planar intersections between the Hermitian curve $\He$ and the parabolas are the same.
\end{lemma}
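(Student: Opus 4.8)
The plan is to reduce the intersection condition to a single equation in the $x$-coordinate and observe that the constant term of the parabola enters only through its trace. A point $(x,y)$ lies on $\He$ and on the parabola $y=ax^2+bx+c$ precisely when both $x^{q+1}=\Tra(y)$ and $y=ax^2+bx+c$ hold. Substituting the second equation into the first, and using $\No(x)=x^{q+1}$, I would write the intersection condition purely in terms of $x$ as
\begin{equation*}
\No(x)=\Tra(ax^2+bx+c).
\end{equation*}
Since the trace is $\Fq$-linear and additive, I would then expand the right-hand side as $\Tra(ax^2)+\Tra(bx)+\Tra(c)$, isolating the constant term.

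First I would note that the resulting equation for the parabola with constant term $c$ reads $\No(x)-\Tra(ax^2)-\Tra(bx)=\Tra(c)$, and the analogous equation for the parabola with constant term $\bar c$ reads $\No(x)-\Tra(ax^2)-\Tra(bx)=\Tra(\bar c)$. The left-hand sides are identical, so under the hypothesis $\Tra(\bar c)=\Tra(c)$ the two equations coincide. Consequently the two parabolas determine exactly the same set of admissible $x$-coordinates.

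The last step is to pass from $x$-coordinates back to actual planar points. For a fixed admissible $x$, the value $y=ax^2+bx+c$ is uniquely determined by the parabola equation, so each valid $x$ yields exactly one intersection point on each curve; the projection $(x,y)\mapsto x$ is therefore a bijection between the set of planar intersection points and the set of solutions $x$ of the displayed equation. Since both parabolas share the same solution set in $x$, the two projections have the same cardinality, and hence the planar intersections of $\He$ with the two parabolas coincide in number.

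The argument is essentially immediate once the substitution is made; the only point requiring care is the final bijection between $x$-coordinates and intersection points, where one must observe that distinct admissible values of $x$ give distinct points and that $y$ is fully pinned down by $x$ through the parabola, so that counting points reduces to counting solutions of one $x$-equation. There is no genuine obstacle beyond keeping track of the fact that planar intersections are counted without multiplicity, which matches exactly the count of distinct admissible $x$.
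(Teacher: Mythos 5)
Your proof is correct and follows essentially the same route as the paper: substitute the parabola into $x^{q+1}=y^q+y$, use the $\Fq$-linearity of $\Tra$ to see that $c$ enters only through $\Tra(c)$, and conclude that the two resulting equations in $x$ coincide. The only difference is that you make explicit the bijection between admissible $x$-values and intersection points, which the paper leaves implicit; this is a harmless (and welcome) extra sentence, not a different argument.
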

\begin{proof}
From a set $\{y=ax^2+bx+\bar c\}\cap\He$ and another set $\{y=ax^2+bx+c\}\cap\He$ we obtain by direct substitution respectively
$x^{q+1}=a^qx^{2q}+ax^2+b^qx^q+bx+\Tra(\bar c)$ and $x^{q+1}=a^qx^{2q}+ax^2+b^qx^q+bx+\Tra(c)$.
If $\Tra(\bar c)=\Tra(c)$, the two equations are identical.
\end{proof}

Finally, we recall the definition of \textit{quadratic character} of $\FF_q$.\\
Let $q$ be odd, then

$$
\eta (a) =\left\{\begin{array}{rl}
    1 & \mbox{ if } a \equiv  k^2 \mod q \\
   -1 & \mbox{ if } a \not\equiv k^2 \mod q \\
    0 & \mbox{ if } a\equiv 0 \mod q. \\
\end{array}\right.
$$

\begin{theorem}\label{teo.quad}
Let $f(x)=ax^2+bx+c \in \Fq[x]$ with $q$ odd and $a\ne 0$.\\ 
Let $d=b^2-4ac$, then
$$
\sum_{\gamma\in\Fq} \eta(f(\gamma)) =  
\left\{\begin{array}{cl}
   -\eta(a) & \mbox{ if } d \ne 0 \\
   (q-1)\eta(a)  & \mbox{ if } d=0. 
\end{array} \right.
$$
\begin{proof}
See Theorem 5.48 of \cite{CGC-cd-book-niederreiter97}
\end{proof}
\end{theorem}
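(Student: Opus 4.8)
The plan is to reduce this character sum to a standard evaluation by completing the square, exploiting that the quadratic character $\eta$ is a multiplicative function on $\Fq^*$ (extended by $\eta(0)=0$), so that $\eta(uv)=\eta(u)\eta(v)$ holds for all $u,v\in\Fq$. Since $q$ is odd and $a\neq 0$, both $2$ and $4a$ are invertible in $\Fq$, so I can write
\[
f(x)=a\left(x+\frac{b}{2a}\right)^2-\frac{d}{4a},\qquad d=b^2-4ac.
\]
As $\gamma$ runs over $\Fq$, the shifted variable $z=\gamma+\tfrac{b}{2a}$ runs over all of $\Fq$ as well, hence
\[
\sum_{\gamma\in\Fq}\eta(f(\gamma))=\sum_{z\in\Fq}\eta\!\left(az^2-\frac{d}{4a}\right)=\eta(a)\sum_{z\in\Fq}\eta\!\left(z^2-\frac{d}{4a^2}\right),
\]
where in the last step I factored $a$ out of the argument (note $a\cdot\tfrac{d}{4a^2}=\tfrac{d}{4a}$) and used multiplicativity of $\eta$.

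Next I would set $e=\tfrac{d}{4a^2}$ and split into the two cases. If $d=0$ then $e=0$ and the inner sum is $\sum_{z}\eta(z^2)$; since $\eta(z^2)=1$ for $z\neq 0$ and $\eta(0)=0$, this equals $q-1$, yielding the claimed value $(q-1)\eta(a)$. If $d\neq 0$ then $e\neq 0$ (because $a\neq 0$), and the whole statement reduces to the single identity
\[
\sum_{z\in\Fq}\eta(z^2-e)=-1\qquad(e\neq 0),
\]
which would immediately give $-\eta(a)$ and finish the proof.

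The main obstacle, though it is routine, is establishing this last identity. I would prove it by a counting argument: using that $\eta(w)=\#\{t\in\Fq:t^2=w\}-1$ for every $w\in\Fq$ (valid even at $w=0$), the sum equals $\#\{(z,t)\in\Fq^2:z^2-t^2=e\}-q$. The linear substitution $(z,t)\mapsto(z-t,\,z+t)$ is a bijection of $\Fq^2$ (invertible since $2$ is a unit), and transforms $z^2-t^2=e$ into $uv=e$; for $e\neq 0$ this has exactly $q-1$ solutions, one for each $u\in\Fq^*$. Hence the sum equals $(q-1)-q=-1$, as required. Alternatively one could invoke Gauss sums or cite Theorem~5.48 of \cite{CGC-cd-book-niederreiter97} directly, but the elementary counting argument keeps the proof self-contained.
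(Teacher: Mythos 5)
Your proof is correct, and it necessarily takes a different route from the paper's, because the paper gives no argument at all: it simply defers to Theorem 5.48 of \cite{CGC-cd-book-niederreiter97}. Every step of your derivation checks out: completing the square uses only that $2$ and $a$ are invertible (valid since $q$ is odd and $a\ne 0$); the shift $z=\gamma+\frac{b}{2a}$ permutes $\Fq$; pulling $\eta(a)$ out of the sum is legitimate even at zeros of the argument, since with the convention $\eta(0)=0$ the identity $\eta(uv)=\eta(u)\eta(v)$ holds for all $u,v\in\Fq$; the case $d=0$ reduces to $\sum_{z\in\Fq}\eta(z^2)=q-1$; and the crux, $\sum_{z\in\Fq}\eta(z^2-e)=-1$ for $e\ne 0$, follows from your two observations that $\eta(w)=\#\{t\in\Fq : t^2=w\}-1$ for every $w$ (including $w=0$) and that $(z,t)\mapsto(z-t,\,z+t)$ is a bijection of $\Fq^2$ carrying the solution set of $z^2-t^2=e$ onto the hyperbola $uv=e$, which has exactly $q-1$ points. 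In effect you have reconstructed the standard textbook proof: it is close in spirit to the argument behind Theorem 5.48 of \cite{CGC-cd-book-niederreiter97} itself, which likewise reduces, after completing the square, to a character sum of the form $\sum_{u}\eta(u^2-d)$. What your version buys is self-containment: Theorem \ref{teo.quad} becomes independent of an external reference, at the cost of a short elementary counting lemma; what the citation buys is brevity and placement of the statement in its classical context. Either is acceptable, and your argument could be substituted for the citation without change.
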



\subsection{Automorphisms of Hermitian curve}\label{aut.her}

We consider an automorphism group $Aut(\He / \FQ)$ of the Hermitian curve over $\FQ$. $Aut(\He / \FQ)$ contains a subgroup $\Gamma$, such that any $\sigma\in\Gamma$ has the following form, as in \cite{CGC-alg-art-xing95} and in Section 8.2 of \cite{CGC-cd-book-stich}:
$$\sigma\left(\begin{array}{c}x \\ y\end{array}\right)=
\left(\begin{array}{c}\epsilon x+\gamma \\ \epsilon^{q+1}y+\epsilon\gamma^qx+\delta\end{array}\right)$$
with $(\gamma,\delta)\in\He$, $\epsilon\in\FQ^*$. 
Note that $\Gamma$ is also a subset of the group of affine
transformations preserving the set of $\FQ$-rational affine points of $\He$.\\
If we choose $\epsilon=1$ we obtain the following automorphisms 

\begin{equation}\label{autom}
  \left\{\begin{array}{l}
  x\longmapsto x+\gamma\\
  y\longmapsto y+\gamma^qx+\delta
\end{array}\right.\qquad\mbox{ with }(\gamma,\delta)\in\He,
\end{equation}
that form a subgroup $\Lambda$ with $q^3$ elements, see Section \texttt{II} of \cite{CGC-cod-art-stichtenoth1988note}.\\
The reason why we are interested in the curve automorphisms is the following.
If we apply any $\sigma$ to any curve $\xx$ in the affine plane, then the planar intersections  between
$\sigma(\xx)$ and $\He$ will be the same as the planar intersections between $\xx$ and $\He$. We recall that the number of planar intersection between two curves $X$ and $Y$ lying in the affine plane $\mathbb{A}^2(\Fq)$  is
 the number of (affine plane) points
that lie in both curves, disregarding multiplicity.
So, if we find out the number of intersections between $\xx$ and $\He$, we will
automatically have the number of intersection between $\sigma(\xx)$ and $\He$ for all $\sigma\in \Gamma$.
This is convenient because we can isolate special classes of parabolas
that act as representatives in the orbit $\{\sigma(\xx)\}_{\sigma\in\Gamma}$.
These special types of parabolas may be easier to handle.\\

Note that 
\noindent if we apply (\ref{autom}) to $y=ax^2$, we obtain
\begin{eqnarray}\label{par}
  y=ax^2+x(2a\gamma-\gamma^q)+a\gamma^2-\delta,
\end{eqnarray}
 
\noindent while if we apply (\ref{autom}) to $y=ax^2+c$ we obtain
\begin{eqnarray}\label{parc}
  y=ax^2+x(2a\gamma-\gamma^q)+a\gamma^2-\delta+c.
\end{eqnarray}

\noindent  In the general case, if we have $y=ax^2+bx+c$ and apply the automorphism (\ref{autom}) we obtain 
\begin{equation}\label{autpartot}
  y=ax^2+(2a\gamma-\gamma^q+b)x+a\gamma^2+b\gamma-\delta+c.
\end{equation}

To prove Theorem~\ref{teo.principe}, we have to study two distinct cases depending on the field characteristic.
Subsection~\ref{odd} is devoted to the proof of Theorem \ref{teo.principe} when the characteristic is odd,
while Subsection~\ref{even} is devoted to the proof of Theorem \ref{teo.principe} when the characteristic is even.


\section{Intersection between Hermitian curve $\He$ and parabolas}
\label{parabole}
We recall the definition of the Hermitian curve $\He$ on $\FQ$, i.e.
$$x^{q+1}=y^q+y, \mbox{ where } x,y\in\FQ.$$ 

\noindent The number of planar intersection between two curves $X$ and $Y$ lying in the affine plane $\mathbb{A}^2(\Fq)$  may have applications for the codes constructed from  $X$ and $Y$.
Regarding $\He$, it is interesting for coding theory applications
\cite{CGC-cod-art-couvreur2011dual},
\cite{CGC-cod-art-ballico2012goppa,CGC-cod-art-ballico2012geometry,CGC-cod-art-fontanari2011geometry}
to consider an arbitrary parabola $y=ax^2+bx+c$ over
$\FQ$ and to compute their planar intersection.
Moreover, it is essential to know precisely the number of parabolas
having a given planar intersection with $\He$.
Although nice partial results have been recently obtained in
\cite{CGC-alg-art-dondur10,CGC-alg-art-dondurkor09} (where a much more general
situation is treated), 
we present here for the first time a complete classification in the following theorem.\\

\begin{theorem}\label{teo.principe}
For $q$ odd, the only possible planar intersections of $\He$ and a parabola are $\{0,1,q-1,q,$ $q+1,2q-1,2q\}$.
For any possible planar intersection we provide in the next tables the exact number of parabolas sharing that value.

{\small{\begin{table}[h]
\centering
\begin{tabular}{|l||c|c|c|}
\hline
$\#\He\cap$ parabola & 0 & 1 & $q-1$  \\
\hline
$\#$ parabolas & $q^2(q+1)\frac{(q-1)}{2}$ & $q^2(q+1)\frac{q(q-3)}{2}$ & $q^2(q+1)\frac{q(q-1)^2}{2}$  \\
\hline
\multicolumn{4}{c}{$ \quad$} \\
\hline
$\# \He\cap$ parabola & $q$ &\multicolumn{2}{c|}{ $q+1$} \\
\hline
$\#$ parabolas & $q^2(q+1)(q^2-q+1)$ &\multicolumn{2}{c|}{$q^2(q+1)\frac{q(q-1)(q-3)}{2}$}  \\
\hline
\multicolumn{4}{c}{$ \quad$} \\
\hline
$\# \He\cap$ parabola & $2q-1$ & \multicolumn{2}{c|}{ $2q$}\\
\hline
$\#$ parabolas & $q^2(q+1)\frac{q(q-1)}{2}$ & \multicolumn{2}{c|}{$q^2(q+1)\frac{(q-1)}{2}$} \\
\hline
\end{tabular}
\end{table}}}

\break

For $q$ even, the only possible planar intersections  of $\He$ and a parabola  are $\{1,q-1,q+1,2q-1\}$.
For any possible planar intersection we provide in the next tables the exact number of parabolas sharing that value.

\begin{table}[h]
\centering
\begin{tabular}{|l||c|c|}
\hline
$\# \He\cap$ parabola &  1 & $q-1$  \\
\hline
$\#$ parabolas & $q^3(q+1)(\frac{q}{2}-1)$ & $q^3(q+1)(q-1)\frac{q}{2}$\\
\hline
\multicolumn{3}{c}{$ \quad$} \\
\hline
$\# \He\cap$ parabola &  $q+1$ & $2q-1$ \\
\hline
$\#$ parabolas & $q^3(q+1)(q-1)(\frac{q}{2}-1)$ & $q^3(q+1)\frac{q}{2}$ \\
\hline
\end{tabular}
\end{table} 

\end{theorem}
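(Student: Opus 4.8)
The plan is to convert the intersection problem into a fibre-counting problem for the map $F_a$ of (\ref{prin}), and then to exploit the fact that $F_a$ is a binary quadratic form over $\Fq$. First I would substitute $y=ax^2+bx+c$ into $\No(x)=\Tra(y)$; because a parabola is a graph, each admissible $x$ gives exactly one point, so the planar intersection equals $\#\{x\in\FQ:x^{q+1}-a^qx^{2q}-ax^2-b^qx^q-bx=\Tra(c)\}$, i.e. $\#\{x:F_a(x)-\Tra(bx)=\Tra(c)\}$. Next I would use the automorphisms (\ref{autom}), which fix the leading coefficient $a$ and shift the linear term by $2a\gamma-\gamma^q$ as recorded in (\ref{autpartot}), to try to annihilate $b$. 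The relevant map is $f(\gamma)=2a\gamma-\gamma^q$ of Lemma~\ref{corFON}, which is $\Fq$-linear; by Lemma~\ref{sol} its kernel is nontrivial exactly when $4\No(a)=1$. Hence when $4\No(a)\neq1$ every $b$ can be absorbed, and by Lemma~\ref{lemtraccia} the count reduces to solving $F_a(x)=\lambda$ with $\lambda=\Tra(c)\in\Fq$.

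The core of the argument is then the evaluation of $N_a(\lambda)=\#\{x\in\FQ:F_a(x)=\lambda\}$. Viewing $\FQ$ as a $2$-dimensional $\Fq$-space, $F_a$ is a binary quadratic form with polar form $B(x,x')=-\Tra\big(x'f(x)\big)$, so $F_a$ is degenerate precisely when $\ker f\neq0$, i.e. when $4\No(a)=1$. For $q$ odd I would organize the count over the $q+1$ orbits of $\Fq^*$ acting by scaling on $\FQ^*$: by Lemma~\ref{lemmaFa} we have $F_a(\omega x)=\omega^2F_a(x)$, so on each orbit $F_a$ either vanishes or, for each $\lambda\neq0$, the number of preimages is $1+\eta\big(\lambda F_a(x_0)\big)$. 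Writing $n_+,n_-,n_0$ for the number of directions on which $F_a$ is a nonzero square, a non-square, or zero, this gives $N_a(\lambda)=(n_++n_-)+\eta(\lambda)(n_+-n_-)$ for $\lambda\neq0$ and $N_a(0)=1+(q-1)n_0$. A character-sum computation through Theorem~\ref{teo.quad}, applied to the quadratic polynomial $s\mapsto F_a(s+\theta)$ whose discriminant equals (up to squares) $1-4\No(a)$, forces $n_+=n_-$ and pins down $n_0\in\{0,2\}$ according to $\eta(1-4\No(a))$. This yields the hyperbolic values $\{q-1,2q-1\}$ and the elliptic values $\{1,q+1\}$ when $4\No(a)\neq1$; when $4\No(a)=1$ a direct rank-one analysis of the degenerate form gives $\{0,q,2q\}$, with Lemma~\ref{aquad} controlling the square classes that distinguish the values $0$ and $2q$.

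For $q$ even the quadratic character is unavailable, so this case needs a separate treatment. Here $4\No(a)=0\neq1$, hence $f$ is the Frobenius, always bijective, so $b$ can always be removed and $F_a$ is always nondegenerate; this already explains why the values $\{0,q,2q\}$ cannot occur. Moreover squaring is a bijection on $\Fq$, so on each non-isotropic direction $F_a$ hits every nonzero value exactly once, giving $N_a(\lambda)=q+1-n_0$ for $\lambda\neq0$ and $N_a(0)=1+(q-1)n_0$ with $n_0\in\{0,2\}$. The classification into elliptic ($n_0=0$, values $\{1,q+1\}$) and hyperbolic ($n_0=2$, values $\{q-1,2q-1\}$) would instead be obtained from an additive-character, or Arf-invariant, computation of $\#\{x:F_a(x)=0\}$, reducing to a trace condition on $a$.

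Finally, to count the parabolas I would assemble these fibre counts. For fixed $a$ with nondegenerate $F_a$, completing the square sends $(b,c)$ to a target $\lambda'=\Tra(c)-F_a(x_b)\in\Fq$ which, as $(b,c)$ ranges over $\FQ\times\FQ$, hits each element of $\Fq$ equally often, since $\Tra$ has fibres of size $q$; multiplying by the number of $a$ of each type, determined through $\No(a)$, $\eta$, and Lemma~\ref{aquad}, produces the table entries, the $\lambda'=0$ column being distinguished from the square and non-square columns. I expect the main obstacle to be the exact bookkeeping in the odd-characteristic degenerate case $4\No(a)=1$, where $b$ cannot be fully eliminated and one must track the single coset $\I(f)$ of unremovable linear terms together with the square/non-square splitting of $\Tra(c)$, and, in parallel, supplying a self-contained quadratic-form classification in characteristic $2$ to replace Theorem~\ref{teo.quad}.
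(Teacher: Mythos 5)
Your proposal is correct and would yield the theorem, but it is organized genuinely differently from the paper's proof, so a comparison is worthwhile. The shared core is the reduction to counting fibres of $F_a$, and both arguments ultimately invoke Theorem~\ref{teo.quad} on a restriction of $F_a$ to an affine $\Fq$-line whose discriminant is, up to squares, $\Delta=1-4\No(a)$ (the paper does this inside Lemma~\ref{lem:ab}); your nondegenerate/degenerate dichotomy is exactly the paper's $\Delta\ne 0$ versus $\Delta=0$, and your elliptic/hyperbolic split by $\eta(\Delta)$ is the paper's split between $z\in\Fq$ and $z\in\FQ\setminus\Fq$ for $\Delta=z^2$. Where you diverge: the paper eliminates $b$ via the curve automorphisms (\ref{autom}), which obliges it to prove orbit statements (all $q^3$ images of $y=ax^2$ are distinct when $\Delta\ne 0$, plus a finer pair-count when $\Delta=0$), to handle the degenerate unremovable-$b$ case through two long explicit theorems about $y=a(x+v)^2$, and to obtain the $q$-intersection entry only by subtraction from the total $q^4(q^2-1)$; you instead complete the square inside the counting problem (no curve automorphisms at all), organize each fibre count over the $q+1$ scaling directions via $(n_+,n_-,n_0)$, and would obtain every table entry directly, including the $q$-entry. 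Your framework also gives the cleanest explanation of why $\{0,q,2q\}$ disappear for $q$ even: $f$ becomes the Frobenius, hence bijective, so $F_a$ is never degenerate. Two caveats, both reparable inside your framework. First, $\I(f)$ is the subspace of \emph{removable} linear terms ($b$ is removable iff $b\in\I(f)$), not a coset of unremovable ones; for $b\notin\I(f)$ your rank-one analysis does close the gap, and more cheaply than the paper does: one checks that $(\ker f)^{\perp}=\I(f)$ for the trace pairing, so $k\mapsto\Tra(bk)$ is bijective on $\ker f$, while $F_a(x)=-a^q(x^q-2ax)^2$ is constant on each coset of $\ker f$, whence each of the $q$ cosets contributes exactly one solution of $F_a(x)-\Tra(bx)=\lambda$ and the count is $q$ for every $\lambda$. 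Second, the determination of $n_0$ is not forced by the $n_+=n_-$ character sum alone (that identity is consistent with both $n_0=0$ and $n_0=2$); you additionally need the root count $1+\eta(d)$ for the restricted quadratics, or equivalently the paper's Hilbert-90 analysis of $x^{q-1}=(1\pm z)/(2a^q)$ via Lemma~\ref{sol}, and in characteristic $2$ the Artin--Schreier computation showing the dichotomy is governed by $\Tr{\Fq}{\F}{\No(a)}$. Both steps are routine, so your route goes through and is, on balance, shorter and more structural than the paper's, at the cost of importing standard quadratic-form theory rather than staying with elementary explicit computations.
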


\subsection{Odd characteristics}\label{odd}

In this subsection, $q$ is always odd.\\


We provide now the sketch of the proof. Let $a,b,c\in\FQ$ and $a\ne 0$. We organize the proof in two main parts: in the first we analyze the intersection between $\He$ and parabola of type $y=ax^2+ c$ 
(Subsection \ref{sub_ac}) and in the second one we compute the number of intersection between $\He$ and the more general case $y=ax^2+bx+ c$
(Subsection~\ref{sub_abc}) for nearly all cases. The last cases are dealt with a simple counting argument. For each of these subsections the proof is organized in several steps that are summarized in the following scheme.
\begin{itemize}
    \item[1)] $\mathbf{\He \cap \{y=ax^2+ c\}}.$\\
By intersecting $\He$ with $\{y=ax^2+ c\}$ we are led to consider the equation
$F_{a}(x)=-\Tra(c)$, where $F_{a}(x)$ is as in (\ref{prin}), that is, 
$$
\begin{array}{l}
F_{a}(x)= a^qx^2\left(x^{q-1}-\frac{1+\sqrt{\Delta}}{2a^q}\right)
\left(x^{q-1}-\frac{1-\sqrt{\Delta}}{2a^q}\right) \mbox{ with }\\
\Delta =  1-4\No(a).
\end{array}
$$
We call $\mathcal{A}:= \He \cap \{y=ax^2+ c\}$ with $a,c$ fixed. We are interested in the $x$ position of points in $\mathcal A$ (that we call ``solutions of $\mathcal A$''). That is, the number of the $x$'s that verify the equation $F_{a}(x)=-\Tra(c)$, with $a,c$ fixed.\\ 
We consider two subcases:
\begin{itemize}
    \item[$*$] $\Tra(c)=0$. If
\begin{itemize}
    \item[-]  $\Delta=0 \implies $ $\mathcal{A}$ has $q$ solutions.
    \item[-]  $\Delta=1 \implies $ $\mathcal{A}$ has no solution.
    \item[-]  $\Delta=z^2 \in \Fq \backslash \{0,1\}\implies 
\left\{\begin{array}{lcl}
z\in\Fq & \implies & \mathcal{A} \mbox{ has } 1 \mbox{ solution.}\\
z\not\in\Fq & \implies & \mathcal{A} \mbox{ has } 2q-1 \mbox{ solutions.}\\
\end{array}\right.$\\
\end{itemize}
Now we apply the \textit{automorphism} (\ref{autom}) to the parabola $y=ax^2+ c$ and we obtain $y=ax^2+ b'x+ c'$, for some $b',c'\in\FQ$. We prove that when $\Delta\ne 0$, we have exactly $q^3$ distinct parabolas that share with $y=ax^2$ the planar intersection.\\

    \item[$*$]  $\Tra(c)\ne 0$. If
\begin{itemize}
    \item[-]  $\Delta=0 \implies  
\left\{\begin{array}{l}
 \mathcal{A} \mbox{ has } 0 \mbox{ solution.}\\
 \mathcal{A} \mbox{ has } 2q \mbox{ solutions.}\\
\end{array}\right. \mbox{Both cases depend on } a.$
    \item[-]  $\Delta=z^2 \in \Fq \backslash \{0,1\}\implies 
\left\{\begin{array}{lcl}
z\in\Fq & \implies & \mathcal{A} \mbox{ has } q+1 \mbox{ solutions.}\\
z\not\in\Fq & \implies & \mathcal{A} \mbox{ has } q-1 \mbox{ solutions.}\\
\end{array}\right.$
\end{itemize}
\end{itemize}
    \item[2)]  $\mathbf{\He \cap \{y=ax^2+ bx+ c\}}$.\\
We apply the \textit{automorphism} (\ref{autom}) to the parabola $y=ax^2+ bx+ c$ and we obtain $y=ax^2+ b'x+ c'$, for some $b',c'\in\FQ$.
We consider two subcases:
\begin{itemize}
    \item[$*$] $b'\ne 0$.  If $4\No(a)=1$, that is, $\Delta=0$ we can write any such parabola as $y = a(x+v)^2$, where $v\in\FQ$ such that $v^q +2av\ne 0$. So we have $q$ points of intersection between $\He$ and $y = a(x+v)^2$, with $a,v$ fixed.

    \item[$*$]  $b'= 0$. Therefore, we apparent fall in the case $1$). However, we actually dealing with different conditions of type $\Tra(c)=0,\Tra(c)\ne 0,$ $\Delta =0$ and $\Delta \ne 0$.\\
So we apply the \textit{automorphism} (\ref{autom}) to the parabola $y=ax^2+ c'$ and we obtain $y=ax^2+  bx+ \bar c$, for some $\bar c\in\FQ$. We want to understand \textit{how many different parabolas} we can obtain when $\Tra(\bar c)\ne 0$. We divide two cases:
\begin{itemize}
    \item[-]  $\Delta=0\implies $ $q^2(q+1)$ possible parabolas (fixing $a$).
    \item[-]  $\Delta=z^2 \in \Fq \backslash \{0,1\}\implies $ $q^3(q-1)$ possible parabolas (fixing $a$).
\end{itemize}
\end{itemize}
\item[3)] Finally, we obtain the number of parabolas that have $q$ intersections with the Hermitian curve by a simple counting argument. 
\end{itemize}


\subsubsection{Intersection between $\He$ and $y=ax^2+ c$}\label{sub_ac}
Intersecting a parabola of the form $y=ax^2+ c$ with
the Hermitian curve, we obtain $x^{q+1}=a^q x^{2q}+ax^2+\Tra(c)$ which is equivalent to
\begin{equation}\label{eqiniziale}
  \No(x)-\Tra(ax^2)=F_a(x)=\Tra(c).
\end{equation}
We have to study the number of solutions of (\ref{eqiniziale}).
From this equation we get $a^q x^{2q}-x^{q+1}+ax^2=-\Tra(c)$, that is,
\begin{equation}\label{eqquad}
x^2(a^q x^{2q-2}-x^{q-1}+a)=-\Tra(c).
\end{equation}
Now we set $x^{q-1}=t$ and we factorize the polynomial $a^q t^2-t+a$ in $\FQ[t]$, obtaining
$$t_{1,2}=\frac{1\pm\sqrt{1-4\No(a)}}{2a^q}=\frac{1\pm\sqrt{\Delta}}{2a^q}$$
where  $\Delta=1-4\No(a)$. So equation (\ref{eqquad}) becomes
\begin{equation}\label{eqini}
a^qx^2\left(x^{q-1}-\frac{1+\sqrt{\Delta}}{2a^q}\right)
\left(x^{q-1}-\frac{1-\sqrt{\Delta}}{2a^q}\right)=-\Tra(c).    
\end{equation}
Since $\Delta\in\Fq$, there exists $z\in\FQ$ such that $\Delta=z^2$, and so  equation (\ref{eqini}) is in $\FQ[x]$.\\ 
Note that 
$$\Delta=0\iff \No(2a)=1.$$ 
So, in this special case, (\ref{eqini}) becomes 
$a^qx^2(x^{q-1}-2a)^2=-\Tra(c)$. We have proved the following lemma.
\begin{lemma}\label{deltaquad}
By intersecting a parabola $y=ax^2+c$, where $\No(2a)=1$,  and the Hermitian curve, we obtain the following equation
$$a^qx^2(x^{q-1}-2a)^2=-\Tra(c).$$
\end{lemma}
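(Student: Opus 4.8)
The plan is to derive this lemma directly from the general factorization already established in equation~(\ref{eqini}), by substituting the condition that characterizes the case $\Delta = 0$. Recall from the preceding computation that intersecting $y = ax^2 + c$ with $\He$ leads, after setting $x^{q-1} = t$ and factoring $a^q t^2 - t + a$, to the equation
$$
a^q x^2 \left(x^{q-1} - \frac{1+\sqrt{\Delta}}{2a^q}\right)\left(x^{q-1} - \frac{1-\sqrt{\Delta}}{2a^q}\right) = -\Tra(c),
$$
where $\Delta = 1 - 4\No(a)$. The essential observation is that the hypothesis $\No(2a) = 1$ is equivalent to $\Delta = 0$: indeed, by the remark preceding Lemma~\ref{aquad} we have $\No(2a) = 4\No(a)$, so $\No(2a) = 1$ if and only if $4\No(a) = 1$, which is exactly $\Delta = 0$. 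This equivalence has in fact already been noted in the text as $\Delta = 0 \iff \No(2a) = 1$.

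The main step is then simply to set $\Delta = 0$ in the factored equation. When $\sqrt{\Delta} = 0$, both roots $t_{1,2} = \frac{1 \pm \sqrt{\Delta}}{2a^q}$ collapse to the single repeated root $t = \frac{1}{2a^q}$, so the two linear factors coincide and the left-hand side becomes
$$
a^q x^2 \left(x^{q-1} - \frac{1}{2a^q}\right)^2 = -\Tra(c).
$$
To match the desired form I would factor $\frac{1}{2a^q}$ out of the squared bracket. First I need to rewrite $\frac{1}{2a^q}$ in terms of $2a$: since $\No(2a) = (2a)^{q+1} = 1$, we have $(2a)^q = (2a)^{-1}$, hence $\frac{1}{2a^q} = \frac{2a}{(2a)^{q+1}} = \frac{2a}{\No(2a)} = 2a$. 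Substituting this gives precisely $a^q x^2 (x^{q-1} - 2a)^2 = -\Tra(c)$, which is the claimed equation.

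I do not expect any genuine obstacle here, as the lemma is a specialization of work already done; the only point requiring a small verification is the identity $\frac{1}{2a^q} = 2a$ under the hypothesis $\No(2a) = 1$, which is the place where the condition $\Delta = 0$ is actually used rather than merely stated. The proof is therefore just a few lines: invoke the equivalence $\No(2a) = 1 \iff \Delta = 0$, collapse the repeated factor in~(\ref{eqini}), and simplify the constant $\frac{1}{2a^q}$ to $2a$ using $(2a)^{q+1} = 1$. No further case analysis or counting is needed at this stage, since the enumeration of solutions in the various subcases is carried out later.
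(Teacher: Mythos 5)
Your proposal is correct and follows essentially the same route as the paper: the lemma is obtained by specializing the factored equation~(\ref{eqini}) to the case $\Delta=0$, using the equivalence $\No(2a)=1\iff\Delta=0$. The only difference is that you explicitly verify the simplification $\frac{1}{2a^q}=2a$ via $(2a)^{q+1}=1$, a step the paper leaves implicit.
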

\noindent Recall that $\alpha$ is a primitive element of $\FQ$ and $\beta=\alpha^{q+1} $ is a primitive element of~$\Fq$.\\

\begin{lemma}\label{lem:ab}
Let $x=\alpha^j\beta^i$, with $j=0,\ldots,q$ and $i=0,\ldots,q-2$. Then
\begin{itemize}
  \item[\ding{71}] If $4\No(a)\ne 1$, then the non-zero values $F_a(\alpha^j\beta^i)$ are all the elements of~$\Fq^*$.
  \item[\ding{71}] If $4\No(a)=1$, then the non-zero values $F_a(\alpha^j \beta^i)$ are half of the elements of~$\Fq^*$.
\end{itemize}
\end{lemma}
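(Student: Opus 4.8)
The plan is to exploit two structural features of $F_a$: it takes values in $\Fq$, and by Lemma~\ref{lemmaFa} it is homogeneous of degree $2$ under scaling by $\Fq^*$. The identity $F_a(\omega x)=\omega^2F_a(x)$ shows that if $c=F_a(x)\neq 0$ then, as $\omega$ runs over $\Fq^*$, the value $\omega^2 c$ sweeps out the whole coset $c\,(\Fq^*)^2$. Hence the set of nonzero values of $F_a$ is a union of cosets of the squares $(\Fq^*)^2$ in $\Fq^*$, of which there are exactly two. The parametrization $x=\alpha^j\beta^i$ merely re-encodes this, since $\beta^i$ ranges over $\Fq^*$. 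So the statement is equivalent to the following dichotomy: the nonzero values of $F_a$ fill all of $\Fq^*$ iff $F_a$ attains both a nonzero square value and a non-square value, and they fill exactly half of $\Fq^*$ (a single coset) iff all nonzero values lie in one class. I must therefore decide, according to $\Delta=1-4\No(a)$, whether $F_a$ meets one or both square-classes.

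The main tool is the restriction of $F_a$ to an $\Fq$-affine line. For $u,v\in\FQ$ with $v\neq 0$ and $\gamma\in\Fq$ (so $\gamma^q=\gamma$), expanding norm and trace gives
\[
F_a(u+\gamma v)=F_a(v)\,\gamma^2-\Tra\!\bigl(u\,f(v)\bigr)\,\gamma+F_a(u),
\]
where $f(v)=2av-v^q$ is the $\Fq$-linear map of Lemma~\ref{corFON}. When $F_a(v)\neq 0$ this is a genuine quadratic in $\gamma$ over $\Fq$ with leading coefficient $F_a(v)$ and discriminant $d=\Tra(uf(v))^2-4F_a(v)F_a(u)$, so Theorem~\ref{teo.quad} applies: if $d\neq 0$ then $\sum_{\gamma\in\Fq}\eta\bigl(F_a(u+\gamma v)\bigr)=-\eta(F_a(v))=\pm1$.

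Assume $\Delta\neq 0$; I claim there is such a line with $d\neq 0$. Granting this, the signed sum over the $q$ points of that line equals $\pm1$, while each summand is $0,\pm1$ and at most two of them vanish (a quadratic has at most two roots). The at least $q-2\ge 3$ nonzero summands (for $q\ge 5$) cannot all share a sign, since their sum is only $\pm1$; hence both $+1$ and $-1$ occur, so $F_a$ attains a nonzero square and a non-square value and, by the dichotomy, hits all of $\Fq^*$ (the field $q=3$ is checked by hand). To produce a non-degenerate line, suppose for contradiction that every line in some admissible direction $v$ (with $F_a(v)\neq0$, which exists since $F_a\not\equiv 0$) is degenerate, i.e.\ $\Tra(uf(v))^2=4F_a(v)F_a(u)$ for all $u$. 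Comparing coefficients of $u^2$ and $u^{q+1}$ (the monomials $u^2,u^{q+1},u^{2q}$ are linearly independent as functions) yields $f(v)^2=-4aF_a(v)$ and $\No(f(v))=2F_a(v)$. Taking the norm of the first identity, with $\No(-4a)=16\No(a)$ and $\No(F_a(v))=F_a(v)^2$, gives $\No(f(v))^2=16\No(a)F_a(v)^2$, whereas squaring the second gives $\No(f(v))^2=4F_a(v)^2$; comparing forces $4\No(a)=1$, i.e.\ $\Delta=0$, a contradiction.

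Finally assume $\Delta=0$, i.e.\ $\No(2a)=1$. By Lemma~\ref{deltaquad} one has $F_a(x)=-a^q(x^q-2ax)^2=-a^q f(x)^2$. Since $\No(2a)=1$, the equation $x^{q-1}=2a$ has solutions (Lemma~\ref{sol}), so $f$ has nontrivial kernel and, by Lemma~\ref{corFON}, its image $\I(f)$ is a one-dimensional $\Fq$-subspace, say $\I(f)=\Fq\,w_0$. Hence the nonzero values of $F_a$ are $\{-a^q\lambda^2w_0^2:\lambda\in\Fq^*\}=(-a^qw_0^2)\,(\Fq^*)^2$, a single square-coset of size $(q-1)/2$, that is, exactly half of $\Fq^*$; Lemma~\ref{aquad} gives an alternative route by exhibiting $a$, hence the relevant square-class, explicitly. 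The main obstacle is precisely the existence of a non-degenerate line when $\Delta\neq0$: everything turns on the coefficient comparison that converts ``all lines degenerate'' into the scalar identity $4\No(a)=1$, while the sign-counting and the single small field $q=3$ are only minor endpoints.
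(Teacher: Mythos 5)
Your proof is correct, and its skeleton coincides with the paper's: both arguments rest on the coset dichotomy coming from Lemma~\ref{lemmaFa} (the nonzero value set of $F_a$ is a union of cosets of $(\Fq^*)^2$ in $\Fq^*$, hence empty, one coset, or all of $\Fq^*$), on restricting $F_a$ to an affine $\Fq$-line, and on the character-sum Theorem~\ref{teo.quad}; your $\Delta=0$ case is the paper's computation in different clothes (the paper uses Lemma~\ref{deltaquad} to write $F_a(x)=-a^q(x^q-2ax)^2$ and observes that the values $\beta^{2i}F_a(\alpha^j)$ fill one square class, while you phrase the same fact as $\I(f)=\Fq w_0$ and get the single coset $(-a^qw_0^2)(\Fq^*)^2$). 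The genuine differences are in the two steps surrounding the character sum. (i) The paper never needs your existence-by-contradiction argument for a non-degenerate line: it restricts to the special line $x=y+\gamma$, $y\in\Fq$, with $\gamma\ne0$, $\Tra(\gamma)=0$, whose discriminant is identically $4\gamma^2\Delta$, so non-degeneracy is literally the hypothesis $\Delta\ne0$. Your substitute --- if every base point $u$ in an admissible direction $v$ gave a degenerate quadratic, then comparing the coefficients of $u^2$ and $u^{q+1}$ in $\Tra(uf(v))^2=4F_a(v)F_a(u)$ forces $f(v)^2=-4aF_a(v)$ and $\No(f(v))=2F_a(v)$, whence $4\No(a)=1$ --- is correct (the expansion $F_a(u+\gamma v)=F_a(v)\gamma^2-\Tra(uf(v))\gamma+F_a(u)$ and the norm computation both check out), and it buys robustness: it shows a good line exists in every admissible direction without having to guess the trace-zero trick. (ii) To extract both square classes, the paper compares the sign of the sum, $-\eta(F_a(1))$, with the class of $F_a(1)$ itself, which is an attained value; you instead count vanishing summands, which requires $q-2\ge 3$ and forces you to defer $q=3$ to a hand check you do not actually perform. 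That caveat is removable inside your own setup at zero cost: the leading coefficient $F_a(v)$ is itself attained (at $x=v$), and your sum equals $-\eta(F_a(v))$, so the line must carry a value in the square class opposite to that of $F_a(v)$; hence both classes are attained for every odd $q$, with no counting and no exceptional case.
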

\begin{proof}
We recall that $F_a(x)=x^{q+1}-a^q x^{2q}-ax^2$.
We fix an index $j$ such that $F_a(\alpha^j)\ne 0$.
The set of the values 
$$
\{F_a(\alpha^j \beta^i)\}_{0\leq i\leq q-2}=
\{\beta^{2i}F_a(\alpha^j)\}_{0\leq i \leq q-2}
$$ 
contains half of the elements of $\Fq^*$, since $q$ is odd (and $\frac{q-1}{2}$ is an integer) and so $\beta^{2(\frac{q-1}{2})}=\alpha^{q^2-1}=1$. In particular, if $F_a(\alpha^j)$ is a square then $\{\beta^{2i}F_a(\alpha^j)\}_{0\leq i \leq q-2}$ are all squares of $\Fq^*$ (and vice-versa if it is a non-square).\\
Suppose that $F_a(1)=1-a^q-a$ is a square and let $\bar x=y+\gamma \in\FQ$, where $y\in\Fq$, $\gamma \ne 0$ and $\Tra(\gamma)=0$. Then
$$
\begin{array}{ll}
 F_a(\bar x)&=-a^q(y-\gamma)^2+(y-\gamma)(y+\gamma)-a(y+\gamma)^2\\
&=y^2(1-a^q-a)+2\gamma y(a^q-a)-\gamma^2(a^q+a+1) :=f_{\gamma}(y).  
\end{array}
$$
Since $f_{\gamma}(y)\in\Fq[y]$, we can apply Theorem \ref{teo.quad}, where $d=4\gamma^2(1-4\No(a))=4\gamma^2\Delta$.
\begin{itemize}
\item[\ding{71}] If $\Delta\ne 0$ we have
$\sum_{\epsilon\in\Fq} \eta(f_{\gamma}(\epsilon)) = -\eta(1-a^q-a) = -1$, that is, there exists at least a $y_1\in\Fq$ such that $f_{\gamma}(y_1)$ is not a square. Let  $\bar x_1=y_1+\gamma \in\FQ$, then $F_a( \bar x_1 \beta^i)$ and $F_a(1 \beta^i)$, varying $i=0,\ldots,q-2$, are all elements of $\Fq^*$ (that are all non-squares and all squares respectively).
\item[\ding{71}]  If $\Delta =0$, by Lemma \ref{deltaquad}, $F_a(x)$ becomes $-a^qx^2(x^{q-1}-2a)^2$,
so $\beta^{2i}F_a(\alpha^j)=-a^q\beta^{2i}(\alpha^{jq}-2a\alpha^j)^2$
and they are half of the elements of $\Fq^*$. In particular if $-a^q$ is a square we obtain all squares of $\Fq^*$, vice-versa, if $\eta(-a^q) = -1$, we have all non-squares of $\Fq^*$.
\end{itemize} 
\end{proof}

Now we study the number of solutions of equation (\ref{eqiniziale}),
analyzing two cases: when $\Tra(c)=0$ and when $\Tra(c)\ne 0$.\\

\begin{itemize}
  \item[$*$] Case $\Tra(c)=0$. By Lemma \ref{lemtraccia}, it is enough to study
the case $c=0$, which is the intersection between $\He$ and $y=ax^2$.
By (\ref{eqini}) we have
$$a^qx^2\left(x^{q-1}-\frac{1+\sqrt{\Delta}}{2a^q}\right)
\left(x^{q-1}-\frac{1-\sqrt{\Delta}}{2a^q}\right)=0.$$
We must differentiate our argument depending on $\Delta$. Recall that $\Delta\in\Fq$.
\begin{itemize}
    \item[-] $\Delta=0$.
By Lemma \ref{deltaquad}, (\ref{eqini}) becomes
$$
a^qx^2(x^{q-1}-2a)^2=0.
$$
So we have always one solution $x=0$ and the solutions of $x^{q-1}=2a$.
Since $\No(2a)=1$, by Lemma \ref{sol}, the number of solutions of $x^{q-1}=2a$ are $q-1$.
Therefore, in this case, we have $q$ points of intersections between the parabola and the Hermitian curve $\He$. 

\noindent By condition on $a$, i.e. $\No(2a)=1$, we have $(q+1)$ distinct $a$'s.
    \item[-]$\Delta=1$. That is, $\No(2a)=0\iff a=0$, which is impossible.
    \item[-]$\Delta\in\Fq\backslash\{0,1\}$.
We note that any element in $\Fq$ can always be written as $z^2$ with $z\in\FQ$. So let $\Delta=z^2$.
In order to study the solutions of (\ref{eqini}), we can consider the solutions of the following equations
\begin{equation}\label{stella}
    x^{q-1}=\frac{1\pm z}{2a^q}.
\end{equation}
By Lemma \ref{sol} we know that $x^{q-1}=\frac{1+z}{2a^q}$ has solutions
if and only if $\No\left(\frac{1+z}{2a^q}\right)=1$. Note that
{\small{$$\No\left(\frac{1+z}{2a^q}\right)=1\iff\frac{(1+z)^{q+1}}{1-z^2}=1\iff 1-z=(1+z)^q\iff -z=z^q$$}}

\noindent We obtain the same result for $x^{q-1}=\frac{1-z}{2a^q}$.\\
If (\ref{stella}) has a solution $x$ and $z\in\Fq$, then $z$ simultaneously satisfies $z^q=z$ and $z^q=-z$. Since $q$ is odd, this is possible only when $z=0$, which implies $\Delta=0$, which is not admissible.\\

\noindent  Returning to count the intersection points, thanks to the previous discussion of the solution of (\ref{stella}), we have to consider two distinct cases:
\begin{itemize}
    \item[\ding{71}] $z=z^q$, that is,  $z\in\Fq$.  Since $z\ne 0,1$, there
    are $\frac{q-1}{2}-1=\frac{q-3}{2}$ possible values of $z^2$, and so we have $(q+1)\frac{q-3}{2}$ values of $a$. 
In this case, the parabola $y=ax^2+c$ intersects $\He$ in only $one$ point (with $x=0$).
    \item[\ding{71}] $z=-z^q$. The equation $-z=z^q$ has only one solution in $\Fq$, so the other $q-1$ solutions are in $\FQ\backslash\Fq$.
For such $z$, we have $2(q-1)+1=2q-1$ points of intersection.
That is, $q-1$ solutions from equation $x^{q-1}=\frac{1-z}{2a^q}$, $q-1$ solutions from equation $x^{q-1}=\frac{1+z}{2a^q}$ and one point from $x=0$.\\
It is simple to verify that the number of $z^2$ such that $z\in\FQ\backslash\Fq$ is $\frac{q-1}{2}$.
So we have $(q+1)\frac{q-1}{2}$ values of $a$ for which we have exactly
$2q-1$ points of intersection between $y=ax^2+c$ and $\He$.
\end{itemize}
\end{itemize}

Now we apply the automorphism (\ref{autom}) and we want to compute how many different parabolas
we can obtain. Applying (\ref{autom}) to $y=ax^2$ we obtain (\ref{par}):
$$
  y=ax^2+x(2a\gamma-\gamma^q)+a\gamma^2-\delta.
$$
For the moment, we restrict our counting argument to the case $\Delta\ne 0$.
We note that if $\Delta\ne 0$, we have a maximal orbit, that is,
all possible parabolas are distinct (there are $q^3$ because $\Gamma$ has $q^3$ elements).
In other words, we claim that it is impossible that we obtain
two equal parabolas with $(\gamma,\delta)\ne(\bar\gamma,\bar\delta)$.
To prove that, we have to solve the following system:

{\footnotesize{$$
\left\{\begin{array}{l}
2a\bar\gamma-\bar\gamma^q=2a\gamma-\gamma^q\\
a\bar\gamma^2-\bar\delta=a\gamma^2-\delta\\
\gamma^{q+1}=\delta^q+\delta\\
\bar\gamma^{q+1}=\bar\delta^q+\bar\delta\\
1-4a^{q+1}\ne 0.
\end{array}\right.
$$}}

\noindent However, $2a\bar\gamma-\bar\gamma^q=2a\gamma-\gamma^q\iff
2a(\bar\gamma-\gamma)=\bar\gamma^q-\gamma^q=(\bar\gamma-\gamma)^q$. Raising the equation to the power of $q+1$, we obtain  $(2a)^{q+1}(\bar\gamma-\gamma)^{q+1}=(\bar\gamma-\gamma)^{q^2+q}$, that is, $4a^{q+1}(\bar\gamma-\gamma)^q(\bar\gamma-\gamma)=
(\bar\gamma-\gamma)(\bar\gamma-\gamma)^q$, which is equivalent to $4a^{q+1}=1$. This is impossible, since $\Delta\ne 0$.\\
Hence, when $\Delta\ne 0$, we have exactly $q^3$ distinct parabolas that have
the same planar intersections  with $\He$ as $y=ax^2$ has.

  \item[$*$] Case $y=ax^2+c$, with $\Tra(c)\ne 0$.
As in previous case, we have to differentiate depending on $\Delta$.
\begin{itemize}
    \item[-] If $\Delta=z^2$ and $z\in\Fq$, we know that $F_a(x)$ vanishes only if $x=0$.
If $x\ne 0$, then by Lemma \ref{lem:ab}, $F_a(\beta^i\alpha^j)=\beta^{2i}F_a(\alpha^j)=t$
assumes every value of $\Fq^*$. 
But $x =\beta^i\alpha^j$ assumes $q^2- 1$ distinct values, varying $i$ and $j$.
So every $t$ is obtained $q + 1$ times ($F_a(x)$ is a polynomial of degree $q + 1$). 
Hence, the equation $F_a(x)=\Tra(c)$ has exactly $q+1$ solutions.
    \item[-] If $\Delta=z^2$ and $z\in\FQ\backslash\Fq$, we know that $F_a(x)=0$
has $2q-1$ solutions. So there are exactly two distinct values of $j$ such that
$F_a(\alpha^j)=0$, one for each equation $x^{q-1}=\frac{1\pm z}{2a^q}$ (to find the $q-1$ solutions, we vary $i$).
So every value in $\Fq^*$ is obtained $q-1$ times. Hence,
the equation $F_a(x)=\Tra(c)$ has exactly $q-1$ solutions.
    \item[-] If $\Delta=0$ we have $4a^{q+1}=1$.\\ 
So (\ref{prin}) can be written as $ a^qx^2(x^{q-1}-2a)^2=-\Tra(c)$, that is,
\hspace{-1.5cm}\begin{equation}\label{dzero}
  x^2(x^{q-1}-2a)^2=-4a\Tra(c)=-4a\beta^r
\end{equation}
for some fixed $r$ with $1\leq r\leq q-1$.\\ 
Note that (\ref{dzero}) can be written as $f(x)^2=-4a\Tra(c)$,
where $f$ is as in Lemma \ref{corFON}, that is, $f(x)=x^{q}-2ax$.\\
We note that $-4a\beta^r$ is always a square in $\FQ$.
In fact $-4\beta^r$ is a square because it lies in $\Fq$,
and also $a$ is a square by Lemma~\ref{aquad}. Let us write $-4a\beta^r=\alpha^{2h}$, so (\ref{dzero}) becomes
$x(x^{q-1}-2a)=\pm\,\alpha^{\,h}$ where $0\leq h\leq \frac{q^2-1}{2}$.

\noindent We consider the ``positive'' case:
\begin{equation}\label{cond1}
f(x)=x^{q}-2ax=\alpha^{\,h}.
\end{equation} 
\noindent It is simple to prove that if $x$ is a solution of equation
(\ref{cond1}), then $-x$ is a solution of the equation $x^q-2ax=-\alpha^h$.
So by  Lemma~\ref{corFON} the equation $F_a(x)=\Tra(c)$ has $0$ solutions
if $\alpha^h$ is not in $\I(f)$ or $2q$ solution if $\alpha^h$ is in $\I(f)$.

\end{itemize}
\end{itemize}

\subsubsection{Intersection between $\He$ and $y=ax^2+bx+ c$}\label{sub_abc}
We consider a parabola $y=ax^2+bx+c$, apply the automorphism (\ref{autom})
and we obtain (\ref{autpartot}).\\
Note that, for any $k\in\FQ$,
\begin{equation}\label{bdiv}
  2a\gamma-\gamma^q+b=k\implies 2ab^q+b=2ak^q+k,
\end{equation}
because $b^q=(k-2a\gamma+\gamma^q)^q=k^q-\frac{1}{2a}\gamma^q+\gamma=k^q+
\frac{1}{2a}(-\gamma^q+2a\gamma)=k^q+\frac{1}{2a}(k-b)$.\\

A consequence is that $2a\gamma-\gamma^q+b=0\implies 2ab^q+b=0$.\\

We consider two distinct cases $2a\gamma-\gamma^q+b=0$ and $2a\gamma-\gamma^q+b\ne 0$.\\


\noindent\begin{tabular}{|c|}
\hline
$2a\gamma-\gamma^q+b\ne 0$.\\
\hline
\end{tabular}\\

\begin{thm}
Let $y=ax^2+bx+c$ be a parabola with $2ab^q+b\ne 0$ and $\No(2a)=~1$.
Then there exists $\gamma$ such that for any $\delta$, applying the
automorphism~(\ref{autom}), we obtain $y=ax^2+(2a\gamma-\gamma^q+b)x+a\gamma^2+ b\gamma-\delta+c$,
with $2a\gamma-\gamma^q+b\ne 0$. We can write any such parabola as $y=(ux+uv)^2$ where $a=u^2$ and $v^q+2av\ne 0$.
\end{thm}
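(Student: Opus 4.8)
The plan is to move the parabola into perfect-square form by a single automorphism (\ref{autom}), the whole argument resting on the arithmetic fact that $\No(2a)=1$ is equivalent to $(2a)^q=(2a)^{-1}$. Before anything else I would record that $\No(2a)=1$ forces $4\No(a)=1$, so Lemma \ref{aquad} supplies a square root $a=u^2$ with $u\in\FQ$; this is exactly what turns $a(x+v)^2$ into $(ux+uv)^2$ at the end, so I can set it aside and aim only at producing the shape $a(x+v)^2$.

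First I would control the linear coefficient $b'=2a\gamma-\gamma^q+b$ of the transformed parabola in (\ref{autpartot}). The tool is the invariance (\ref{bdiv}): with $k=b'$ it reads $2a(b')^q+b'=2ab^q+b$. Since $2ab^q+b\ne 0$ by hypothesis, $b'$ can never vanish (otherwise the left-hand side would be $0$), so $2a\gamma-\gamma^q+b\ne 0$ holds for \emph{every} $\gamma$; the linear-coefficient requirement is therefore automatic and puts no constraint on $\gamma$.

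Next I would complete the square, setting $v=b'/(2a)$, so that the image is $a(x+v)^2$ up to an additive constant, i.e.\ $(ux+uv)^2$ up to a constant. The non-degeneracy $v^q+2av\ne 0$ is the core of the statement and comes out at once: from $\No(2a)=1$ we have $(2a)^{-q}=2a$, whence
$$v^q+2av=(b')^q(2a)^{-q}+b'=2a(b')^q+b'=2ab^q+b\ne 0,$$
the last step being (\ref{bdiv}) again; this simultaneously re-confirms $b'\ne 0$.

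The delicate point, which I expect to be the main obstacle, is to justify discarding the additive constant and reading the image as exactly $(ux+uv)^2$. By Lemma \ref{lemtraccia} the planar intersection with $\He$ depends only on the trace of the constant term, and for a fixed $\gamma$ this trace is $\Tra(a\gamma^2+b\gamma+c)-\No(\gamma)$, which does not involve $\delta$ because $(\gamma,\delta)\in\He$ pins $\Tra(\delta)=\No(\gamma)$ --- this independence is exactly why the conclusion holds ``for any $\delta$''. Matching this trace with $\Tra(av^2)$ reduces, after multiplying by $4\No(a)=1$, to an equation $F_{a^q}(s)+\Tra(b^q s)=\mathrm{const}$ in one unknown $s\in\FQ$. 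The quadratic part $F_{a^q}$ alone misses half of $\Fq^*$ by Lemma \ref{lem:ab}, so the real content is that the linear perturbation $\Tra(b^q s)$ --- nonzero precisely because $2ab^q+b\ne 0$ --- restores surjectivity and thereby produces the required $\gamma$. I would settle this by the same counting already carried out for $\He\cap\{y=ax^2+c\}$ in Subsection \ref{sub_ac}.
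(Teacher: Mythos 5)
Your opening moves are correct and coincide with the paper's own: the invariance (\ref{bdiv}) gives both $b'=2a\gamma-\gamma^q+b\ne 0$ for every $\gamma$ and, taking $k=2av$, the identity $v^q+2av=2a(b')^q+b'=2ab^q+b\ne 0$, which is exactly how the paper proves non-degeneracy. Your algebraic reduction of the trace-matching condition is also right: it does come down to $F_{a^q}(s)+\Tra(b^qs)=\mathrm{const}$ with $s=2a\gamma$ (equivalently $F_a(\gamma)+\Tra(2ab^q\gamma)=\Tra(a^qb^2)-\Tra(c)$). The genuine gap sits precisely at the step you yourself call ``the real content'': the surjectivity of $s\mapsto F_{a^q}(s)+\Tra(b^qs)$ onto $\Fq$ is asserted, and the tool you invoke --- ``the same counting as in Subsection \ref{sub_ac}'' --- cannot prove it. That counting rests on the homogeneity of Lemma \ref{lemmaFa}, $F_a(\omega x)=\omega^2F_a(x)$, and on the factorization $F_a(x)=-a^q(x^q-2ax)^2$ valid when $\Delta=0$; these show that the image of the \emph{unperturbed} $F_a$ is $\{0\}$ together with only half of $\Fq^*$ (this is the source of the $0$-or-$2q$ dichotomy in that subsection). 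The linear term destroys the homogeneity, so nothing there transfers, and a new argument is needed. A short one exists: since $\No(2a^q)=1$, the kernel $K$ of the $\Fq$-linear map $s\mapsto s^q-2a^qs$ has exactly $q$ elements (Lemma \ref{sol}); on $K$ the quadratic part $F_{a^q}(s)=-a(s^q-2a^qs)^2$ vanishes identically, while $\Tra(b^qs)=b^qs+bs^q=(b^q+2a^qb)s=(2ab^q+b)^q s$, so your map restricted to $K$ is multiplication by a nonzero scalar, an $\Fq$-linear bijection of $K$ onto $\Fq$. Note where the hypothesis really enters: what matters is not that $\Tra(b^q\,\cdot\,)$ is a nonzero form on $\FQ$ (that is merely $b\ne 0$), but that it does not vanish on $K$, which is exactly $2ab^q+b\ne 0$; if instead $2ab^q+b=0$, the linear term can be absorbed into the quadratic by a translation and surjectivity genuinely fails.

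A second, smaller discrepancy: even with surjectivity supplied, your route via Lemma \ref{lemtraccia} concludes only that the transformed parabola has the \emph{same planar intersection} with $\He$ as $y=(ux+uv)^2$, whereas the theorem claims --- and the paper proves --- that for a suitable $(\gamma,\delta)\in\He$ the image \emph{is} $y=(ux+uv)^2$. The paper's mechanism is quite different from yours: it fixes $v$, notes by Lemma \ref{corFON} that there are $q$ admissible $\gamma$'s, and shows that the map $(\gamma,\delta)\mapsto c$ defined by system (\ref{c}) is injective on the $q^2$ admissible pairs, the crux being $(2av+b)+2a(2av+b)^q=2(2ab^q+b)\ne 0$; injectivity on $q^2$ pairs forces surjectivity onto all $q^2$ values of $c$. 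Your approach can be upgraded to the exact statement at no extra cost: once $\gamma$ solves your trace equation, set $\delta:=a\gamma^2+b\gamma+c-av^2$; then $\Tra(\delta)=\Tra(a\gamma^2+b\gamma+c)-\Tra(av^2)=\No(\gamma)$, so $(\gamma,\delta)\in\He$ and this automorphism sends $y=ax^2+bx+c$ exactly to $y=a(x+v)^2=(ux+uv)^2$. So: replace the appeal to Subsection \ref{sub_ac} by the kernel argument above, and finish with this explicit choice of $\delta$ rather than with Lemma \ref{lemtraccia}; with those two repairs your proof is complete and is a legitimate alternative to the paper's counting argument.
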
  
\begin{proof}
Because of (\ref{bdiv}) with $k\ne 0$ we have that, since $2ab^q+b\ne 0$,
there exists $\gamma$ such that $2a\gamma-\gamma^q+b\ne 0$.\\
Let $k\in \FQ\backslash\{0\}$ such that $2a\gamma-\gamma^q+ b=k\ne 0$. By  Lemma
\ref{corFON}, if there exists at least one solution of $2a\gamma-\gamma^q=k-b$,
then there exist $q$ solutions. So we have at least $q$ different $\gamma$'s
that verify the previous equation.\\
To prove that any parabola as in (\ref{autpartot}) can be written $y=(ux+uv)^2$
with $a=u^2$ and $v^q+2av\ne 0$, we claim that it is sufficient to prove that the solutions of the following system contain all $c$'s.
\begin{equation}\label{c}
\left\{\begin{array}{l}
2a\gamma-\gamma^q+b=2av\ne 0\\
a\gamma^2+b\gamma-\delta+c=av^2\ne 0\\
\gamma^{q+1}=\delta^q+\delta\\
1-4a^{q+1}=0
\end{array}\right.
\end{equation}
In fact, system \eqref{c} is obtain as follows. We have the first two equations comparing $y=(ux+uv)^2$ and $y=ax^2+(2a\gamma-\gamma^q+b)x+a\gamma^2+ b\gamma-\delta+c$. 
Note that by Lemma \ref{aquad}, $a=u^2$ is a square so $y=(ux+uv)^2=a(x+v)^2$. 
The third equation denote that the point $(\gamma,\delta)\in\He$ and finally, the last equation is a necessary condition to verify this theorem, that is, $\No(2a)=1$.\\

\noindent We are ready to show the desired property of \eqref{c}'s solutions. Using (\ref{bdiv}), we first observe that the first equation of \eqref{c} 
implies that $v^q+2av\ne 0$. Indeed if we consider (\ref{bdiv}) with $k=2av$,
we have $0\ne 2ab^q+b=2ak^q+k=2a(2av)^q+2av=v^q+2av$.\\
Now, we prove that  \eqref{c} has $q^2$ different $c$'s in its solutions, that is, all possible $c$'s.
From the automorphism point of view, it is enough to prove that the point $(\gamma,\delta)$ of $\He$ are in bijection with the $c$'s contained in solutions of \eqref{c}. Which means that two distinct automorphisms (of the considered type) sends the curve in two distinct curves.\\
Multiplying the first equation by $\gamma$  we substitute $2a\gamma^2$ in the second equation multiplied by 2 and, using the curve equation $\gamma^{q+1}=\delta^q+\delta$, we obtain
\begin{equation}\label{eq.c}
2c=2av^2+\delta-\delta^q-\gamma(2av+b).
\end{equation}
Suppose by contradiction that there exist two points  $(\gamma_1,\delta_1),(\gamma_2,\delta_2)\in\He$ such that $(\gamma_1,\delta_1)\ne(\gamma_2,\delta_2)$ but $c_1=c_2$, where
$c_i$ are as in (\ref{eq.c}). Since  $c_1=c_2$, we would obtain 
\begin{equation}\label{eq.c1}
\gamma_1(2av+b)-\delta_1+\delta_1^q=\gamma_2(2av+b)-\delta_2+\delta_2^q.
\end{equation}
On the other hand, if we raise (\ref{eq.c1}) to the power of $q$ and substitute $\gamma_i^q$ with the first equation of (\ref{c}), that is,
$\gamma_i^q=2a\gamma_i+b-2av$ for $i=1,2$,  we obtain
\begin{equation}\label{eq.c2}
2a\gamma_1(2av+b)^q-\delta_1^q+\delta_1=2a\gamma_2(2av+b)-\delta_2^q+\delta_2.
\end{equation}
Summing the equations (\ref{eq.c1}) and (\ref{eq.c2}) we obtain 
$\gamma_1[(2av+b)+2a(2av+b)^q]=\gamma_2[(2av+b)+2a(2av+b)^q]$, that is, $\gamma_1=\gamma_2$ if $(2av+b)+2a(2av+b)^q\ne 0$.  Note $(2av+b)+2a(2av+b)^q=2av+b+4a^{q+1}v^q+2ab^q= v^q+2av+ 2ab^q+b= 2(2ab^q+b)\ne 0$, where we use the last equation of (\ref{c}), so it must be $\gamma_1=\gamma_2$. However, if $\gamma_1=\gamma_2$ then $\delta_1=\delta_2$ by the second equation of (\ref{c}), and this implies a contradiction. Therefore, for any two different points  $(\gamma_1,\delta_1),(\gamma_2,\delta_2)\in\He$ we have that $c_1\ne c_2$.
To conclude, we show that we have $q^2$ different $c$'s. By the second equation we have $c=\delta+av^2-a\gamma^2-b\gamma$. So, for any $\gamma$ (and there are $q$ possible $\gamma$'s), there are $q$ distinct $\delta$'s (by the curve equation). 
\end{proof}

\begin{thm}
Let $a,v\in\FQ$ such that $\No(2a)=1$ and $v^q+2av\ne 0$. Then the Hermitian curve $\He$
intersects the parabola $y=a(x+v)^2$ in $q$ points.
\end{thm}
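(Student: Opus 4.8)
The plan is to intersect $\He$ directly and reduce the count to a coset argument for an $\Fq$-linear map. Writing $\He$ as $\No(x)=\Tra(y)$ and inserting $y=a(x+v)^2$, I first change variable to $X=x+v$, so that $\Tra(a(x+v)^2)=\Tra(aX^2)$ and the intersection condition reads $\No(X-v)=\Tra(aX^2)$. Expanding $\No(X-v)$ and rearranging, the quadratic part $a^qX^{2q}-X^{q+1}+aX^2$ equals $-F_a(X)$, which, since $\No(2a)=1$ (i.e. $4a^{q+1}=1$), factors as $a^qf(X)^2$ with $f(X)=X^q-2aX$ (this is the factorisation underlying Lemma~\ref{deltaquad}). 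Absorbing the remaining linear terms via $X^q=f(X)+2aX$, the whole condition collapses to $G(X)=\No(v)$, where $G(X):=a^qf(X)^2+vf(X)+\lambda X$ and $\lambda:=v^q+2av$, which is nonzero by hypothesis.

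The heart of the argument is then two structural facts. First, $G$ takes values in $\Fq$. Computing $G(X)^q$ and using the identities $f(X)^q=-2a^qf(X)$ and $\lambda^q=v+2a^qv^q$ (both consequences of $4a^{q+1}=1$), everything simplifies to $G(X)^q-G(X)=(2a\lambda^q-\lambda)X$; a one-line check gives $2a\lambda^q=2av+v^q=\lambda$, so $G(X)^q=G(X)$ for every $X$. Second, $\lambda K=\Fq$, where $K:=\ker f$: for $X\in K$ one has $X^q=2aX$, whence $\lambda X=v^qX+vX^q=\Tra(vX^q)\in\Fq$, so $\lambda K\subseteq\Fq$; since $f$ is $\Fq$-linear (Lemma~\ref{corFON}) with $|K|=q$ by Lemma~\ref{sol} (as $\No(2a)=1$), and $\lambda\neq0$ makes $\lambda(\cdot)$ injective, a cardinality count forces $\lambda K=\Fq$.

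With these in hand the count is immediate. Since $f(X+k)=f(X)$ for $k\in K$, we get $G(X+k)=G(X)+\lambda k$, so $G$ maps each of the $q$ cosets of $K$ in $\FQ$ bijectively onto $G(X_0)+\lambda K=\Fq$ (using $G(X_0)\in\Fq$). As $\No(v)\in\Fq$, the equation $G(X)=\No(v)$ has exactly one solution in each coset, hence exactly $q$ solutions $X\in\FQ$; distinct $X$ give distinct points $(X-v,\,aX^2)$, yielding the claimed $q$ intersection points.

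I expect the main obstacle to be the verification that $G$ is $\Fq$-valued: it is the one place where the hypothesis $\No(2a)=1$ is genuinely used (through $f^q=-2a^qf$ and through $2a\lambda^q=\lambda$), and it is exactly what makes the otherwise ``generic'' conic degenerate to the $q$-point case instead of the $q\pm1$ cases. Note also that $v^q+2av\neq0$ is needed only to ensure $\lambda\neq0$, which is what keeps $\lambda K$ of full size $q$; were $\lambda=0$, the parabola would reduce to the already-treated case $y=ax^2$.
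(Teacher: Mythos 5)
Your proof is correct, and it takes a genuinely different route from the paper's. The paper writes $a=u^2$ (using Lemma~\ref{aquad}), substitutes $z=ux+uv$, and splits into the two cases $u^{q+1}=\pm\frac{1}{2}$; in each case a quadratic-character argument determines which values $t=\No(uv)-\Tra(z(uv)^q)$ can occur (the admissible nonzero $t$ being exactly the non-squares, resp.\ the squares, of $\Fq$), and the solutions $z$ are exhibited explicitly: one for $t=0$, two for each admissible $t\ne 0$, giving $2\cdot\frac{q-1}{2}+1=q$. You avoid both the square root of $a$ and the case split: after the shift $X=x+v$ you reduce the intersection to the single equation $G(X)=\No(v)$ with $G(X)=a^qf(X)^2+vf(X)+\lambda X$, $f(X)=X^q-2aX$, $\lambda=v^q+2av$, and prove two structural facts, namely that $G$ is $\Fq$-valued (via $f(X)^q=-2a^qf(X)$ and $2a\lambda^q=\lambda$, both consequences of $4a^{q+1}=1$) and that $G$ maps each of the $q$ cosets of $K=\ker f$ bijectively onto $\Fq$ (via $G(X+k)=G(X)+\lambda k$ and $\lambda K=\Fq$, with $|K|=q$ from Lemma~\ref{sol}). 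I checked all of these identities and they hold. Your argument actually proves more than the statement: every fibre of $G$ over $\Fq$ has exactly $q$ points, so every parabola $y=a(x+v)^2+c$ with $c\in\FQ$ and $\lambda\ne 0$ meets $\He$ in exactly $q$ points; since completing the square turns any $y=ax^2+bx+c$ with $\No(2a)=1$ and $2ab^q+b\ne 0$ into this form (and $\lambda\ne0$ is equivalent to $2ab^q+b\ne 0$, because $2a^q\lambda=(2ab^q+b)^q$), your single computation subsumes both this theorem and the automorphism-orbit theorem that precedes it in Subsection~\ref{sub_abc}. What the paper's method buys is explicit coordinates for the intersection points and continuity with the character-sum technique (Theorem~\ref{teo.quad}, Lemma~\ref{lem:ab}) used elsewhere in the proof of Theorem~\ref{teo.principe}; what yours buys is uniformity (no dependence on $a$ being a square, no sign cases) and a purely $\Fq$-linear count.
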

\begin{proof}
We have to solve the system
$$
\left\{\begin{array}{l}
y=(ux+uv)^2\\
x^{q+1}=y^q+y
\end{array}\right.\implies x^{q+1}=(ux+uv)^{2q}+(u+uv)^2
$$
By a change of variables $z=ux+uv$, we obtain $(\frac{z-uv}{u})^{q+1}=z^{2q}+z^2$, so we have
$$
-(uv)z^q-(uv)^{q}z+(uv)^{q+1}=u^{q+1}z^{2q}+u^{q+1}z^2-z^{q+1}=
u^{q+1}(z^q-2u^{q+1}z)^2.
$$
Since $\No(2a)=1$ and $a=u^2$, we have $u^{q+1}=\pm \frac{1}{2}$ and so
\begin{align}
  \frac{1}{2}(z^q-z)^2=\No(uv)-\Tra(z(uv)^q)\label{eqz}\\
  -\frac{1}{2}(z^q+z)^2=\No(uv)-\Tra(z(uv)^q)\label{eqzm}
\end{align}

\break

\noindent We consider two cases:
\begin{itemize}
  \item[\ding{71}] If $u^{q+1}=\frac{1}{2}$, we claim that if $z^q-z\ne 0$, then $(z^q-z)^2$ is not a square in~$\Fq$. 
In fact, suppose by contradiction that $(z^q-z)^2=\beta^{2r}$,
then $z^q-z=\beta^{r}\in \Fq$ but also $z^q+z\in\Fq$, so $-2z\in\Fq\iff z\in\Fq$
and so $z^q-z=0$, which is impossible.
  \item[\ding{71}] If $u^{q+1}=-\frac{1}{2}$, we can note that $(z^q+z)^2$ is a square in $\Fq$, because $z^q+z\in\Fq$.
\end{itemize}
Let $t=\No(uv)-\Tra(z(uv)^q)$. So $t\in \Fq$.
Due to (\ref{eqz}) we have $2t=(z^q-z)^2$, while (\ref{eqzm}) becomes $-2t=(z^q+z)^2$.\\

\noindent When $u^{q+1}=\frac{1}{2}$, we have $\frac{q-1}{2}$ values of $t$ (that are all the non-squares) and $t=0$, whereas when $u^{q+1}=-\frac{1}{2}$, we have $\frac{q-1}{2}$ values of $t$
(that are all the squares) and $t=0$.\\
\noindent  Now we consider separately the cases $t=0$ and $t\ne 0$.

\begin{itemize}
  \item[\ding{71}] We claim that if $t=0$ and $u^{q+1}=\pm \frac{1}{2}$ then $z\in\Fq$. Whereas if $t=0$ and $u^{q+1}=-\frac{1}{2}$ then $z\in\FQ\backslash\Fq$.
  We show only the case $u^{q+1}=\frac{1}{2}$. With these assumptions (\ref{eqz}) becomes
$$
-(uv)z^q-(uv)^{q}z+(uv)^{q+1}=0\iff z=\frac{(uv)^{q+1}}{(uv)^{q}+uv}.
$$
We can note that since $v^q+2av\ne 0$, then $(uv)^{q}+uv\ne 0$.
In fact, suppose that $v^q+2av=0$, then $(uv)^{q}+uv=-\frac{1}{2u}2av+uv=0$.\\
We have to verify that $z^q=z$. Indeed $z^q=\frac{(uv)^{q+1}}{uv+(uv)^{q}}=z$.\\
Similar computations (here omitted) show the case $u^{q+1}=-\frac{1}{2}$.
  \item[\ding{71}] We claim that if $t\ne 0\mbox{ and } u^{q+1}=\pm \frac{1}{2}\implies z\not\in\Fq$.
  With these assumptions, we show only the case $u^{q+1}=\frac{1}{2}$. We have $(z^q-z)^2=2t=\alpha^{2r}$,
  that is, $z^q=z \pm \alpha^r$. Now we substitute $z^q$ in
$-(uv)z^q-(uv)^{q}z+(uv)^{q+1}=t $ and we obtain
$-(uv)(\pm\alpha^r+z)-(uv)^{q}z+(uv)^{q+1}=\frac{1}{2}\alpha^{2r}$, that is,
\begin{equation}\label{solz}
z=\frac{(uv)^{q+1}-\frac{1}{2}\alpha^{2r}\mp uv\alpha^r}{\Tra(uv)}
\end{equation}
We can note that $\alpha^{qr}=-\alpha^r$, in fact $2t=\alpha^{2r}\in\Fq$,
so $(\alpha^{2r})^q=\alpha^{2r}$, that is, $\alpha^{rq}=\pm\alpha^{r}$
but $\alpha^r\not\in\Fq$ (since $2t$ is not a square in $\Fq$) so $\alpha^{qr}=-\alpha^r$. We have thus proved
$$
z=\frac{(uv)^{q+1}-\frac{1}{2}\alpha^{2r}\mp uv\alpha^r}{\Tra(uv)}\mbox{ and }
z^q=\frac{(uv)^{q+1}-\frac{1}{2}\alpha^{2r}\pm (uv)^q \alpha^r}{\Tra(uv)}
$$

\noindent Now we have to verify that the two $z$'s as in (\ref{solz}) are solutions of (\ref{eqz}).
We have $z^q-z=\pm\alpha^r$ and $\No(uv)-\Tra(z(uv)^q)=t$. So
$$\pm\alpha^r=z^q-z\iff\pm \Tra(uv)\alpha^r=\pm(uv)^{q}\alpha^{r}\pm uv\alpha^r$$
and
{\small{
$$
\begin{array}{cl}
 & (uv)^{q+1}-z(uv)^q-z^q(uv)=t\\
\iff & (uv)^{q+1}\Tra(uv)-(uv)^q((uv)^{q+1}-\frac{1}{2}\alpha^{2r}\mp uv\alpha^r)+\\
&-uv((uv)^{q+1}-\frac{1}{2}\alpha^{2r}\pm uv \alpha^r)= \Tra(uv) t\\
\iff & (uv)^{q+1}\Tra(uv)+t\, \Tra(uv)-(uv)^{2q+1}-(uv)^{q+2}= \Tra(uv) t\\
\iff & (uv)^{q+1}\Tra(uv)-(uv)^{2q+1}-(uv)^{q+2}= 0.\\
\end{array}
$$}}

\noindent So the $z$'s are solutions of (\ref{eqz}).\\
Similar computations (omitted here)  show the case $u^{q+1}=-\frac{1}{2}$.
\end{itemize}
Therefore, we have two solutions for any $t$ not a square in $\Fq^*$
and we have only one solution when $t=0$.
That is, we get a total of $2\cdot\frac{q-1}{2}+1=q$ intersections.

\noindent The same holds for the case with $u^{q+1}=-\frac{1}{2}$.
\end{proof}


Now we consider the second case.\\

\noindent\begin{tabular}{|c|}
\hline
$2a\gamma-\gamma^q+b=0$.\\
\hline
\end{tabular}
$\,$\\

We note that if $2a\gamma-\gamma^q+b=0$ then $2ab^q+b=0$, and so (\ref{autpartot}) is actually $y=ax^2+\bar c$, where $\bar c =a\gamma^2+b\gamma-\delta+c \in \FQ$. With abuse of notation, we will write  $\bar c = c$, that is, $y=ax^2+ c$. Now we apply the automorphism (\ref{autom})
to the parabola $y=ax^2+c$ and we obtain (\ref{parc}).\\

 We study two different cases: if

\begin{itemize}
  \item[$*$] $\Delta\ne 0$, the parabolas in (\ref{parc}) are all distinct.

The number of values of $c$ such that $\Tra(c)\ne 0$ are exactly $q^2-q$,
but we must be careful and not count twice the same parabola.
In particular, if two parabolas share $a$ and $b$, then they are in the
same orbit if $\Tra(c)=\Tra(c')$.
So we must consider only one of these for any non-zero $\Tra(c)$. Thus there are  $q-1$ values. \\

\noindent Summarizing:
\begin{itemize}
    \item[-] If $\Delta=z^2$ and $z\in\Fq$ (and $\Tra(c)\ne 0$), then the number of parabolas with $q+1$ intersections is 
$$
\underbrace{(q+1)\frac{q-3}{2}}_{a}\underbrace{q^3(q-1)}_{b,c}=\frac{1}{2}
q^3(q^2-1)(q-3).
$$
    \item[-] If $\Delta=z^2$ and $z^q+z=0$ (and $\Tra(c)\ne 0$), then the number of parabolas with $q-1$ intersections is 
$$
\underbrace{(q+1)\frac{q-1}{2}}_{a}\underbrace{q^3(q-1)}_{b,c}=
\frac{1}{2}q^3(q+1)(q-1)^2.
$$
\end{itemize}
  \item[$*$] $\Delta=0$, that is, $4a^{q+1}=1$,
we want to understand how many different parabolas of the type $y=ax^2+bx+\bar c$
(with $a$ fixed) we can obtain. So we have to study the number of pairs $(b,\bar c)$.

We note that  
\begin{equation}\label{tracce}
  \Tra(\bar c)=a^qb^2+\Tra(c).
\end{equation}
In fact
$$
\begin{array}{lll}
\Tra(\bar c) & = & (a\gamma^2-\delta)^q+a\gamma^2-\delta+\Tra(c)\\
& = & (a\gamma^2)^q+a\gamma^2-\gamma^{q+1}+\Tra(c)=a^q\gamma^2(\gamma^{q-1}-2a)^2+\Tra(c).
\end{array} 
$$
Let $\Tra(c)=k$, with $k\in\Fq^*$. Let us consider two distinct cases:
\begin{itemize}
    \item[-] $\Tra(c)=\Tra(\bar c)$. By (\ref{tracce}) we have that $\Tra(c)=\Tra(\bar c) \iff b=0$.\\ 
So the number of pairs $(0,\bar c)$ are exactly $q^2-q$, because they correspond to all $\bar c\in\FQ$ such that $\Tra(\bar c)\ne 0$.
    \item[-] $\Tra(c)\ne \Tra(\bar c)$. Then $\Tra(\bar c)=a^qb^2+k$.\\
Since $b=2a\gamma-\gamma^q$, then, by considering all possible $\gamma$'s, we obtain $q-1$ distinct $b$'s.\\
In fact, we can consider the function $f:\FQ\rightarrow \FQ$ such that
$f(\gamma)=2a\gamma-\gamma^q$. By  Lemma \ref{corFON}, for any $t\in \I(f)$,
the equation $f(\gamma)=t$ has $q$ distinct solutions.\\
Since we are interested in the case $b\ne 0$, we have $\frac{(q^2-q)}{q}=q-1$ different $b$'s.
\noindent We can note that if $b$ is a solution of the equation $2a\gamma-\gamma^q=0$, then $-b$ is also a solution.\\
\noindent Since we are interested in the pairs $(b^2,\bar c)$,
we note that we have to consider the equation $\Tra(\bar c)=a^qb^2+k$,
so the pairs $(b^2,\bar c)$ are exactly $\frac{q-1}{2}(q^2-q)$.
In fact there are $\frac{q-1}{2}$ distinct $b^2$'s and for any pairs $(b^2, k)$ we have exactly $q$ distinct $\bar c\,$'s. While the possible $k$'s are exactly $q-1$ (because $\Tra(c)\ne 0$).\\
All possible pairs $(b,\bar c)$ are $2\frac{q-1}{2}(q^2-q)=(q-1)(q^2-q)$.
\end{itemize}

\noindent We fix $a$ and we obtain exactly
$(q-1)(q^2-q)+q^2-q=q^2(q-1)$ parabolas of the type $y=ax^2+bx+\bar c$.

In conclusion if $\Delta=0$ and $\Tra(c)\ne 0$, then we have
$q^2(q+1)\frac{q-1}{2}$ parabolas with $2q$ or $0$ intersections.
\end{itemize}
The last type of parabolas cannot be easily counted and so we obtain their number by difference.
\begin{claim} The  number of  parabolas that have $q$ intersections with the Hermitian curve $\He$ is $q^2(q+1)(q^2-q+1)$.
\end{claim}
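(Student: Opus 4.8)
The plan is to prove the Claim by a complement (difference) argument: I will compute the total number of parabolas and subtract the number of those whose planar intersection with $\He$ differs from $q$, all of which have already been counted in the preceding two subsections. The reason to proceed indirectly is that the parabolas meeting $\He$ in exactly $q$ points arise from two different configurations — the orbit of $y=ax^2$ with $\No(2a)=1$, and the parabolas $y=a(x+v)^2$ with $v^q+2av\neq 0$ — whose $\Gamma$-orbits are \emph{non-maximal} precisely when $\Delta=0$, so a direct enumeration is awkward.

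First I would count the total number of parabolas. Since a parabola is determined by a triple $(a,b,c)\in\FQ^3$ with $a\neq 0$, there are exactly
$$
N_{\mathrm{tot}}=(q^2-1)\,q^2\,q^2=q^4(q^2-1)
$$
of them. By the case analysis carried out in Subsections~\ref{sub_ac} and~\ref{sub_abc}, every parabola meets $\He$ in a number of points belonging to $\{0,1,q-1,q,q+1,2q-1,2q\}$, and these seven classes are pairwise disjoint and exhaust all parabolas. Writing $N_i$ for the number of parabolas with planar intersection $i$, each $N_i$ with $i\neq q$ has already been determined (these are the entries of the table), so that
$$
N_q=N_{\mathrm{tot}}-\big(N_0+N_1+N_{q-1}+N_{q+1}+N_{2q-1}+N_{2q}\big).
$$

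Next I would carry out the arithmetic. Factoring the common factor $q^2(q+1)$ out of the six known counts, their sum equals $q^2(q+1)\,S$, where
$$
S=\tfrac{q-1}{2}+\tfrac{q(q-3)}{2}+\tfrac{q(q-1)^2}{2}+\tfrac{q(q-1)(q-3)}{2}+\tfrac{q(q-1)}{2}+\tfrac{q-1}{2}=q^3-2q^2+q-1.
$$
Since $N_{\mathrm{tot}}=q^4(q^2-1)=q^2(q+1)\cdot q^2(q-1)$, I then obtain
$$
N_q=q^2(q+1)\big(q^2(q-1)-S\big)=q^2(q+1)\big(q^3-q^2-(q^3-2q^2+q-1)\big)=q^2(q+1)(q^2-q+1),
$$
which is the asserted value.

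The main obstacle is not the computation, which is routine, but justifying the two structural facts on which the difference rests: that the seven intersection classes genuinely partition the set of all parabolas (exhaustiveness being guaranteed by the classification of possible intersection numbers), and that the six non-$q$ counts recorded in the table were obtained without double counting or omission, taking due care with the orbit-size issues when $\Delta=0$. A useful consistency check, which I would include, is that the seven counts sum back to $N_{\mathrm{tot}}$: indeed $S+(q^2-q+1)=q^3-q^2$, and multiplying by $q^2(q+1)$ returns $q^4(q^2-1)$, confirming simultaneously the partition and the final arithmetic.
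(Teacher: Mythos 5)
Your proposal is correct and follows essentially the same route as the paper: the paper also obtains the count by subtracting the previously established numbers of parabolas with $0,1,q-1,q+1,2q-1,2q$ intersections from the total $q^4(q^2-1)$, relying on the fact that the preceding case analysis exhausts all parabolas without double counting. Your arithmetic (factoring out $q^2(q+1)$ and computing $S=q^3-2q^2+q-1$) matches the paper's computation, merely grouped differently.
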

\begin{proof}
The number of total parabolas is $q^4(q^2-1)$. By summing all parabolas that we already counted we obtain
$$q^2(q+1)\left(2\frac{q-1}{2}+q\frac{q-1}{2}(q-1+q-3)+\frac{q}{2}(q-1+q-3)\right)=$$
$$=q^2(q+1)(q-1+q^2(q-2)).$$
So the number of parabolas  that have $q$ intersections with $\He$ is
$$
q^4(q^2-1)-q^2(q+1)(q-1+q^2(q-2))=
$$
$$
q^2(q+1)(q^2(q-1)-q+1-q^2(q-2))=
q^2(q+1)(q^2-q+1).
$$
\end{proof}

We have proved the following theorems, depending on the two conditions $\Tra(c)=0$ or $\Tra(c) \ne 0$.
\begin{theorem}\label{teo.intTr0}
Let $q$ be odd. A parabola $y=ax^2+c$ with $\Tra(c)= 0$ intersects the Hermitian curve $\He$  in $2q-1, q$ or $1$ points.\\ 
Moreover, we have
\begin{description}
  \item[]$(q+1)\frac{q-1}{2}q^3$ parabolas that intersect $\He$ in $2q-1$ points.
  \item[]$q^2(q+1)(q^2-q+1)$ parabolas that intersect $\He$ in $q$ points.
  \item[]$(q+1)\frac{q-3}{2}q^3$ parabolas that intersect $\He$ in $one$ point.
\end{description}
\end{theorem}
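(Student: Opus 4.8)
The plan is to assemble the three admissible values and their multiplicities from the material of Subsections~\ref{sub_ac} and~\ref{aut.her}, treating the two strata $\Delta\ne0$ and $\Delta=0$ quite differently.

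First I would pin down the trichotomy $\{1,q,2q-1\}$. By Lemma~\ref{lemtraccia} the hypothesis $\Tra(c)=0$ lets me replace $y=ax^2+c$ by $y=ax^2$, so I only need the number of $x$ with $F_a(x)=0$, i.e. the roots of~\eqref{eqini}, and I split on $\Delta=1-4\No(a)=z^2$. When $\Delta=0$, Lemma~\ref{deltaquad} reduces the equation to $a^qx^2(x^{q-1}-2a)^2=0$, and since $\No(2a)=1$ Lemma~\ref{sol} supplies the $q-1$ roots of $x^{q-1}=2a$, which with $x=0$ give $q$ points. The value $\Delta=1$ forces $a=0$ and is excluded. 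When $\Delta=z^2\notin\{0,1\}$ the two factors give $x^{q-1}=\frac{1\pm z}{2a^q}$, each solvable by Lemma~\ref{sol} precisely when $z^q=-z$; hence $z\in\Fq$ leaves only $x=0$ (one point) while $z\in\FQ\setminus\Fq$ yields $x=0$ together with $2(q-1)$ further roots ($2q-1$ points).

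Next I would produce the two totals on the stratum $\Delta\ne0$ by multiplying a count of admissible $a$ by a fixed orbit length. Since $a\mapsto1-4\No(a)$ puts exactly $q+1$ values of $a$ over each nonzero $\Delta$, and there are $\frac{q-3}{2}$ squares $z^2\ne0,1$ with $z\in\Fq$ against $\frac{q-1}{2}$ nonsquares $z^2$ (which are exactly the $z\notin\Fq$ with $z^q=-z$), I obtain $(q+1)\frac{q-3}{2}$ values of $a$ giving one point and $(q+1)\frac{q-1}{2}$ giving $2q-1$ points. For each such $a$ the group $\Lambda$ of~\eqref{autom} carries $y=ax^2$ onto $q^3$ distinct parabolas—the injectivity check of Subsection~\ref{sub_ac} uses only $1-4a^{q+1}\ne0$—all with the same planar intersection, and as $\Lambda$ fixes the leading coefficient the orbits for different $a$ are disjoint. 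Conversely any parabola of leading coefficient $a$ and intersection $2q-1$ (resp.\ $1$) can be brought by $\Lambda$ back to a normal form $y=ax^2+c'$, because $f(\gamma)=2a\gamma-\gamma^q$ is injective when $\Delta\ne0$ (its kernel equation $\gamma^{q-1}=2a$ having no root, by Lemma~\ref{sol}); the trichotomy then forces $\Tra(c')=0$, so it already lies in this one orbit. Hence the orbit is the complete fibre, and multiplication yields $(q+1)\frac{q-1}{2}\,q^3$ parabolas with $2q-1$ points and $(q+1)\frac{q-3}{2}\,q^3$ with one point.

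The value $q$ is where I expect the real obstacle, and I would deliberately avoid a direct orbit count there. It occurs only on $\Delta=0$, and on that locus Lemma~\ref{corFON} shows $f(\gamma)=2a\gamma-\gamma^q$ acquires a kernel of size $q$; the $\Lambda$-orbits are then shorter than $q^3$ and collide in their values of $\Tra(\bar c)$, so no single orbit length is available. Instead I would close the count by the complement, exactly as in the preceding Claim: subtract from the grand total $q^4(q^2-1)$ all parabolas already counted—the $2q-1$ and $1$ totals above, the $q+1$ and $q-1$ totals from the $\Tra(c)\ne0$, $\Delta\ne0$ analysis, and the $2q$-or-$0$ total from the $\Delta=0$, $\Tra(\bar c)\ne0$ stratum—which collapses to $q^2(q+1)(q^2-q+1)$. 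The one genuinely delicate point is thus the degeneration of $f$ along $\Delta=0$: it destroys the uniform orbit length that settles the $1$ and $2q-1$ cases and forces the $q$-total to be recovered only last, after every competing stratum (the companion $\Tra(c)\ne0$ computations included) has been counted.
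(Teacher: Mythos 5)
Your proposal is correct and follows essentially the same route as the paper: the same reduction via Lemma~\ref{lemtraccia} and trichotomy on $\Delta=1-4\No(a)$ using Lemmas~\ref{deltaquad} and~\ref{sol}, the same count of admissible $a$'s and the same $q^3$-orbit argument under $\Lambda$ for $\Delta\ne 0$, and the same complementary-counting argument (the paper's Claim) to recover the $q$-intersection total last. Your one refinement---using bijectivity of $f(\gamma)=2a\gamma-\gamma^q$ for $\Delta\ne 0$ to show each orbit exhausts the full fibre of parabolas with that leading coefficient and intersection number---merely makes explicit a completeness point the paper leaves implicit in its Subsection~\ref{sub_abc} case analysis.
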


\begin{theorem}\label{teo.intTrn0}
Let $q$ be odd. A parabola $y=ax^2+c$ with $\Tra(c)\ne 0$ intersects the Hermitian curve $\He$ in $2q, q+1,q-1$ or $0$ points.\\ 
Moreover, we have
\begin{description}
  \item[]$q^2(q+1)\frac{q-1}{2}$ parabolas that intersect $\He$ in $2q$ points.
  \item[]$q^3(q+1)(q-1)\frac{q-3}{2}$ parabolas that intersect $\He$ in $q+1$ points.
  \item[]$q^3(q+1)\frac{(q-1)^2}{2}$ parabolas that intersect $\He$ in $q-1$ points.
  \item[]$q^2(q+1)\frac{q-1}{2}$ parabolas that intersect $\He$ in $0$ point.
\end{description}
\end{theorem}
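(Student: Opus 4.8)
The plan is to deduce the statement from the analysis carried out in Subsections~\ref{sub_ac} and~\ref{sub_abc}, organised according to the discriminant $\Delta=1-4\No(a)$, which depends only on the leading coefficient $a$. By Lemma~\ref{lemtraccia} the planar intersection of $y=ax^2+c$ with $\He$ depends on $c$ only through $\Tra(c)$, and by applying the automorphisms~(\ref{autom}) every parabola $y=ax^2+bx+c$ is equivalent to a representative $y=ax^2+c$ (uniquely so when $\Delta\ne0$, since then $\gamma\mapsto 2a\gamma-\gamma^q$ is a bijection of $\FQ$); hence it suffices to count representatives $y=ax^2+c$ with $\Tra(c)\ne0$ and multiply by the size of their $\Lambda$-orbits. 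First I would recall, from the case $\Tra(c)\ne0$ in Subsection~\ref{sub_ac}, the number of $x$-solutions of $F_a(x)=\Tra(c)$: it equals $q+1$ when $\Delta=z^2$ with $z\in\Fq$, it equals $q-1$ when $z\in\FQ\setminus\Fq$, and it equals $0$ or $2q$ when $\Delta=0$. These are the four admissible values in the statement.

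Next I would count the leading coefficients in each class. Since $\No$ is $(q+1)$-to-one from $\FQ^*$ onto $\Fq^*$, the value $\Delta=1-4\No(a)$ runs over $\Fq\setminus\{1\}$, each value being attained by exactly $q+1$ coefficients~$a$. Therefore there are $(q+1)\frac{q-3}{2}$ coefficients with $\Delta$ a nonzero square different from $1$, there are $(q+1)\frac{q-1}{2}$ with $\Delta$ a non-square, and there are $q+1$ with $\Delta=0$. For $\Delta\ne0$ each orbit under $\Lambda$ is maximal ($q^3$ distinct parabolas, by Subsection~\ref{sub_ac}) and the orbits with $\Tra(c)\ne0$ number $q-1$, giving the factor $q^3(q-1)$. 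Multiplying, I obtain $q^3(q+1)(q-1)\frac{q-3}{2}$ parabolas with $q+1$ intersections and $q^3(q+1)\frac{(q-1)^2}{2}$ parabolas with $q-1$ intersections, matching the first two entries of the table.

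For $\Delta=0$ the total number of relevant parabolas is $q^2(q-1)$ for each of the $q+1$ admissible $a$, that is $q^2(q^2-1)$ in all, as established in Subsection~\ref{sub_abc}. The genuinely delicate point, which I expect to be the main obstacle, is that this total must split \emph{evenly} between the values $2q$ and $0$. By Lemma~\ref{deltaquad} the intersection equation becomes $f(x)^2=-4a\Tra(c)$ with $f(x)=x^q-2ax$, and since $-4a\Tra(c)$ is always a square in $\FQ$ one gets $2q$ intersections exactly when a square root of $-4a\Tra(c)$ lies in $\I(f)$, and $0$ otherwise. Using $4a^{q+1}=1$ one checks $f(x)^q=-\frac{1}{2a}f(x)$, whence $\I(f)=\{w\in\FQ^*:w^{q-1}=-\frac{1}{2a}\}\cup\{0\}$. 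Writing $\Tra(c)=\beta^r$ and using $\beta^{(q-1)/2}=-1$, the membership condition rewrites as $(-4a\beta^r)^{(q-1)/2}=-\frac{1}{2a}$, i.e. $(-1)^r=\mu$ for the fixed scalar $\mu=-\frac{1}{2a}(-4a)^{-(q-1)/2}$; the identity $4a^{q+1}=1$ forces $\mu=\pm1$, so the condition holds for exactly $\frac{q-1}{2}$ of the $q-1$ residues $r$. Hence for each admissible $a$ precisely half of the representatives give $2q$ intersections and half give $0$, and the orbit counting of Subsection~\ref{sub_abc} propagates this even split to the parabolas themselves.

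Collecting the four cases yields $q^3(q+1)(q-1)\frac{q-3}{2}$, $q^3(q+1)\frac{(q-1)^2}{2}$, $q^2(q+1)\frac{q-1}{2}$ and $q^2(q+1)\frac{q-1}{2}$ parabolas with $q+1$, $q-1$, $2q$ and $0$ intersections respectively, which is exactly the content of the theorem.
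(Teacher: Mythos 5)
Your proof is correct, and its skeleton is the same as the paper's: reduce to representatives $y=ax^2+c$ modulo the automorphisms~(\ref{autom}), quote the solution counts of $F_a(x)=\Tra(c)$ from Subsection~\ref{sub_ac} ($q+1$, $q-1$, or $0$/$2q$ according to $\Delta$), and multiply by the orbit counts of Subsection~\ref{sub_abc} ($q^3$ parabolas per orbit and $q-1$ trace classes when $\Delta\ne 0$; $q^2(q-1)$ parabolas per admissible $a$ when $\Delta=0$). Where you genuinely go beyond the paper is the case $\Delta=0$, and this is worth emphasizing: the paper only establishes the dichotomy ($0$ or $2q$ solutions according to whether $\alpha^h\in\I(f)$) and then \emph{asserts} the count $q^2(q+1)\frac{q-1}{2}$ for each of the two outcomes; the even split of the $q-1$ trace classes, which is exactly what the first and last rows of the table require, is never proved there. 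Your argument supplies the missing step, and it checks out: from $4a^{q+1}=1$ one gets $f(x)^q=-\frac{1}{2a}f(x)$, hence $\I(f)=\{w\in\FQ^*\,:\,w^{q-1}=-\frac{1}{2a}\}\cup\{0\}$ (both sides have $q$ elements, since $\No(-\frac{1}{2a})=1$); membership of a square root of $-4a\beta^r$ in $\I(f)$ then reads $(-4a)^{(q-1)/2}(-1)^r=-\frac{1}{2a}$, and since $(-4a)^{q-1}=a^{q-1}=\frac{1}{4a^2}$ the scalar $\mu=-\frac{1}{2a}(-4a)^{-(q-1)/2}$ satisfies $\mu^2=1$, so the condition is a parity condition on $r$ and holds for exactly $\frac{q-1}{2}$ of the $q-1$ residues. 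Two small caveats. First, your opening claim that \emph{every} parabola $y=ax^2+bx+c$ reduces to some $y=ax^2+c$ fails when $\Delta=0$ and $2ab^q+b\ne 0$ (those reduce instead to $y=a(x+v)^2$ and give $q$ intersections); this is harmless for your count, because the orbits you enumerate are disjoint from those, but the statement should be restricted to $\Delta\ne 0$. Second, propagating the even split from representatives to parabolas uses that all the $\Delta=0$ orbits have the same size $q^2$; this does follow from Subsection~\ref{sub_abc}, since the $q-1$ disjoint orbits each have size at most $q^2$ and total $q^2(q-1)$, but it deserves the one-line justification.
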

Therefore, by Theorem \ref{teo.intTr0} and Theorem \ref{teo.intTrn0}, 
we obtain the first half of Theorem \ref{teo.principe}.


\subsection{Even characteristics}\label{even}

In this subsection, $q$ is always even.

We claim that it is enough to consider just two special cases:
$y=ax^2$ and $y=ax^2+c$. Before studying these two cases, we consider the following lemma.


%
\begin{lemma}\label{lem:abeven}
Let $x=\alpha^j\beta^i$, with $j=0,\ldots,q$ and $i=0,\ldots,q-2$; then the
values $F_a(\alpha^j\beta^i)$ that are not zero are all the elements of $\Fq^*$.
\end{lemma}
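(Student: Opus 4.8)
The plan is to reuse the scaling identity of Lemma~\ref{lemmaFa} and to exploit the fact that squaring is a bijection of $\Fq$ in even characteristic, which makes the argument considerably shorter than its odd counterpart (Lemma~\ref{lem:ab}). Since $\beta^i\in\Fq$, Lemma~\ref{lemmaFa} gives
$$F_a(\alpha^j\beta^i)=\beta^{2i}F_a(\alpha^j),$$
so for each fixed $j$ the values $F_a(\alpha^j\beta^i)$ are obtained from $F_a(\alpha^j)$ by multiplying by the squares $\beta^{2i}$.

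First I would observe that, because $q$ is even, $q-1$ is odd and hence $\gcd(2,q-1)=1$; therefore $\beta^2$ is again a primitive element of $\Fq$ (equivalently, the Frobenius $\omega\mapsto\omega^2$ permutes $\Fq$), and the family $\{\beta^{2i}\mid 0\le i\le q-2\}$ is exactly $\Fq^*$. This is the decisive difference with the odd case: there $\{\beta^{2i}\}_i$ covered only the subgroup of squares, so one had to invoke the quadratic character (Theorem~\ref{teo.quad}) to produce both a square and a non-square value of $F_a$; here a single nonzero value already suffices. Indeed, fixing any $j_0$ with $F_a(\alpha^{j_0})\ne0$, one gets
$$\{F_a(\alpha^{j_0}\beta^i)\mid 0\le i\le q-2\}=F_a(\alpha^{j_0})\cdot\Fq^*=\Fq^*.$$
As every nonzero value of $F_a$ lies in $\Fq^*$ and these particular values already exhaust $\Fq^*$, the nonzero values of $F_a$ over the whole family $\{\alpha^j\beta^i\}$ are precisely $\Fq^*$.

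The only remaining point, and the main (minor) obstacle, is to guarantee the existence of an index $j_0$ with $F_a(\alpha^{j_0})\ne0$, i.e. that $F_a$ does not vanish identically. Here I would argue that $F_a(x)=x^{q+1}-a^qx^{2q}-ax^2$ is, for $a\ne0$, a polynomial of degree $2q$ with nonzero leading coefficient; since a nonzero polynomial of degree $2q$ has at most $2q$ roots and $2q<q^2=|\FQ|$ for $q\ge3$, it cannot vanish on all of $\FQ$ (the single field $q=2$ being checked directly). By the scaling identity, vanishing of $F_a$ at every $\alpha^j$ would force $F_a\equiv0$ on all of $\FQ$, which is excluded by this degree count; hence a suitable $j_0$ exists and the claim follows. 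I expect the whole proof to be essentially immediate once the bijectivity of squaring is noted, the degree bound being the only step requiring a line of justification.
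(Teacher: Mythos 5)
Your proof is correct and takes essentially the same route as the paper's: the scaling identity $F_a(\alpha^j\beta^i)=\beta^{2i}F_a(\alpha^j)$ from Lemma~\ref{lemmaFa}, combined with the observation that $\beta^2$ is again a primitive element of $\Fq$ because $\gcd(2,q-1)=1$ in even characteristic. Your extra degree-count argument guaranteeing some $j_0$ with $F_a(\alpha^{j_0})\ne 0$ addresses a point the paper's proof leaves implicit (it just says ``if $F_a(\alpha^j)=0$ we have finished''), so your version is, if anything, slightly more complete.
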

\begin{proof}
Fixing an index $j$, by Lemma \ref{lemmaFa} we have $F_a(\alpha^j \beta^i)=\beta^{2i}F_a(\alpha^j)$. If $F_a(\alpha^j)=0$ we have finished, otherwise $\beta^{2i}F_a(\alpha^j)$ are all elements of $\Fq^*$,
because also $\beta^2$ is a primitive element of $\Fq$.
\end{proof}

We divide the study into two parts.

\begin{itemize}
  \item[$*$] Case $y=ax^2$. We intersect $\He$ with $y=ax^2$ and 
we obtain 
\begin{equation}\label{eq.stel}
x^2(a^q x^{2q-2}-x^{q-1}+a)=0.
\end{equation}
We set $x^{q-1}=t$
and we have to solve the equation $a^q t^2-t+a=0$. Setting $z=ta^q$ we obtain
$$
z^2+z+a^{q+1}=0.
$$
It is known that this equation has solutions in a field of characteristic even
if and only if $\Tr{\FQ}{\F}{a^{q+1}}=0$ (by special case of Artin - Schreier Theorem, see Theorem 6.4 of  \cite{CGC-alg-book-lang02}). 
To show that this latter condition holds, observe first that when 
$\delta\in \Fq$ and $q$ is even, we have $
\Tr{\FQ}{\Fq}{\delta}=0$. Second, observe that
$\Tr{\FQ}{\F}{\cdot}=\Tr{\Fq}{\F}{\Tr{\FQ}{\Fq}{\cdot}}$. Then we can write
$$
\Tr{\FQ}{\F}{a^{q+1}}=\Tr{\Fq}{\F}{\Tr{\FQ}{\Fq}{a^{q+1}}}=\Tr{\Fq}{\F}{0}=0.
$$
We also have $\No(t)=1$, in fact $t^{q+1}=(x^{q-1})^{q+1}=1$. Then we have
$$
z^{q+1}=\No(z)=\No(a^q)=a^{q^2+q}=a^{q+1}=\No(a)
$$
and so the equation becomes $z^2+z+z^{q+1}=0$. Since $t\ne 0,z\ne 0$, then we must have $z^q+z=1.$
We can note that, since $a^{q+1}\in\Fq$, then it is possible to compute its trace
from $\Fq$ to $\F$, and we obtain
{\small{$$
\Tr{\Fq}{\F}{a^{q+1}}=\Tr{\Fq}{\F}{z^{q+1}}=\Tr{\Fq}{\F}{z^2+z}=
z+z^2+z^2+z^4+\ldots+z^{q/2}+z^q=z+z^q.
$$}}
\noindent If it is equal to $0$, we have a contradiction, then there is not any solution $x\in\FQ$.
On the other hand if it is equal to $1$, then we have solutions.

When the solutions exist, since $z^{q+1}=a^{q+1}$,
 a solution $z$ is $a\alpha^{j(q-1)}$, for some $j$,
and the other is $z+1$, which we can write as $a\alpha^{j'(q-1)}$.
From each of these we have the corresponding $t=(\frac{\alpha^j}{a})^{q-1}$ and so the 
$x$'s are $\frac{\alpha^{j+i(q+1)}}{a}=\frac{\alpha^j \beta^i}{a}$ and
$\frac{\alpha^{j'+i(q+1)}}{a}=\frac{\alpha^{j'}\beta^i}{a}$, with $i=0,\ldots,q-2$.


By denoting $A=a^{q+1}$, we summarize the two distinct cases:
\begin{itemize}
  \item[-] If $\Tr{\Fq}{\F}{A}=0$,
then equation (\ref{eq.stel}) has only $one$ solution. On the
other hand, $\Tr{\Fq}{\F}{A}=0$ is satisfied by $q/2$ values for $A$, one of which is $a=0$, which is impossible. So only $\frac{q}{2}-1$ values are actually possible for $A$, each of them having $q+1$ solutions to the equation $a^{q+1}=A$. Therefore, the total number of values for a is   $(\frac{q}{2}-1)(q+1)$.
  \item[-] If $\Tr{\Fq}{\F}{A}=1$,
then equation (\ref{eq.stel}) has $2q-1$ solutions. This happens
  for $\frac{q}{2}$ values of $A$, so the possible values of $a$ are
  $\frac{q}{2}(q+1)$.
\end{itemize}

As in the odd case, we apply the automorphism (\ref{autom}) to the parabolas
of type $y=ax^2$ and we have that distinct automorphisms generate distinct parabolas.
We omit the easy adaption of our earlier proof.\\ 
We have proved the following theorem:
\begin{theorem}\label{teo.intc0}
The Hermitian curve $\He$ and the parabola $y=ax^2$ intersect in either $one$ point or $2q-1$ points.\\ 
Moreover, from the application of (\ref{autom}) to these parabolas, we obtain:
\begin{description}
  \item[]$q^3(\frac{q}{2}-1)(q+1)$ parabolas with $one$ point of intersection with $\He$.
  \item[]$q^3\frac{q}{2}(q+1)$ parabolas with $2q-1$ points of intersection with $\He$.
\end{description}
\end{theorem}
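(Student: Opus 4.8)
The plan is to reduce the intersection problem to an Artin--Schreier condition and then to count by the fibres of the norm and absolute trace. First I would substitute $y=ax^2$ into $\No(x)=\Tra(y)$ to obtain equation (\ref{eq.stel}), namely $x^2(a^q x^{2q-2}-x^{q-1}+a)=0$. The factor $x^2$ contributes the solution $x=0$, present for every $a$ and worth one intersection point. For the remaining non-zero solutions I would set $t=x^{q-1}$ and $z=a^q t$, turning $a^q t^2-t+a=0$ into the Artin--Schreier form $z^2+z+a^{q+1}=0$ over $\FQ$. By Lemma \ref{sol}, a value $t$ is of the shape $x^{q-1}$ exactly when $\No(t)=1$, which translates into $\No(z)=\No(a^q)=a^{q+1}$, that is $z^{q+1}=a^{q+1}$.

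The heart of the argument is to convert this norm constraint into a trace condition on $A:=a^{q+1}\in\Fq$. Combining $z^{q+1}=a^{q+1}$ with the relation $z^2+z=a^{q+1}$ (the sign being irrelevant in characteristic $2$) gives, after dividing by $z\ne 0$, the identity $z^q+z=1$. I would then compute the absolute trace by substituting $a^{q+1}=z^2+z$ termwise and letting the sum telescope,
$$\Tr{\Fq}{\F}{A}=\Tr{\Fq}{\F}{z^2+z}=z+z^q,$$
so a non-zero solution can exist only when $\Tr{\Fq}{\F}{A}=1$. Conversely, when $\Tr{\Fq}{\F}{A}=1$ both roots $z$ and $z+1$ of the Artin--Schreier equation satisfy $z^q+z=1$ and hence $\No(z)=A$; each gives an admissible $t$ with $\No(t)=1$, and by Lemma \ref{sol} each such $t$ lifts to exactly $q-1$ values of $x$. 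This yields $2(q-1)$ non-zero points, which together with $x=0$ give $2q-1$ intersections, while $\Tr{\Fq}{\F}{A}=0$ leaves only the point $x=0$.

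To count the parabolas $y=ax^2$ with each behaviour I would pass through $A=\No(a)$. The norm map is $(q+1)$-to-one from $\FQ^*$ onto $\Fq^*$, and $\Tr{\Fq}{\F}{\cdot}$ is a surjective $\F$-linear map, so exactly $q/2$ elements of $\Fq$ have trace $0$ and $q/2$ have trace $1$. Discarding $A=0$ (which forces $a=0$) from the trace-zero class leaves $q/2-1$ values giving one intersection point and $q/2$ values giving $2q-1$ points; multiplying each by the $q+1$ preimages under $\No$ produces $(\frac{q}{2}-1)(q+1)$ and $\frac{q}{2}(q+1)$ values of $a$ respectively.

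Finally I would promote these $a$-counts to parabola-counts through the automorphism group. Applying (\ref{autom}) to $y=ax^2$ gives, in characteristic $2$, the parabola $y=ax^2+\gamma^q x+a\gamma^2+\delta$; matching coefficients shows that the linear term $\gamma^q$ determines $\gamma$ and then the constant term determines $\delta$, so distinct $(\gamma,\delta)\in\He$ yield distinct parabolas and each orbit under $\Lambda$ has full size $q^3$, every member sharing the intersection number of its representative. Multiplying the two $a$-counts by $q^3$ gives $q^3(\frac{q}{2}-1)(q+1)$ and $q^3\frac{q}{2}(q+1)$, as claimed. The hard part will be the middle step: the Artin--Schreier equation always has roots in $\FQ$ (indeed $\Tr{\FQ}{\F}{a^{q+1}}=0$ for every $a$), so one must resist concluding that solutions always exist and instead recognise that the genuine obstruction lives one level down, in $\Tr{\Fq}{\F}{a^{q+1}}$, reached only through the norm-one constraint and the telescoping trace identity.
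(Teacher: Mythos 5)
Your proposal is correct and follows essentially the same route as the paper: the same reduction via $t=x^{q-1}$, $z=a^qt$ to the Artin--Schreier form $z^2+z+a^{q+1}=0$, the same norm constraint $\No(z)=a^{q+1}$ leading to $z^q+z=1$, the same telescoping identity $\Tr{\Fq}{\F}{a^{q+1}}=z+z^q$, and the same counting through the $(q+1)$-to-one norm map and the $q^3$-element orbits under (\ref{autom}). Your explicit verification that distinct $(\gamma,\delta)$ give distinct parabolas in characteristic $2$, and your clean converse (both roots satisfy $z^q+z=\Tr{\Fq}{\F}{A}=1$, hence $\No(z)=z(z+1)=A$), merely fill in details the paper leaves implicit.
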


\item[*] Case $y=ax^2+c$ with $\Tra(c)\ne 0$. We consider the equation (\ref{prin}).
We divide the problem into two parts:\\
\begin{itemize}
  \item[-] If $\Tr{\Fq}{\F}{a^{q+1}}=0$, we know that $F_a(x)$ is equal to zero only for $x=0$. 
  If $x\ne 0$, then by Lemma \ref{lem:abeven} if we fix $j$ we have that
  $F_a(x)=F_a(\alpha^j\beta^i)=\beta^{2i}F_a(\alpha^j)$ are all the elements of $\Fq^*$.
  But $j$ can assume $q+1$ distinct values, so any value of $\Fq^*$ can be obtained $q+1$ times.
  So, the equation $F_a(x)=\Tra(c)$ has exactly $q+1$ solutions.\\

  \item[-] If $\Tr{\Fq}{\F}{a^{q+1}}=1$, $F_a(x)=0$ has $2q-1$ solutions.
  So, if we fix an index $j$, the values of $F_a(\alpha^j\beta^i)=\beta^{2i}F_a(\alpha^j)$
  are all equal to zero or are all the elements of $\Fq^*$. There are exactly
  two distinct values of $j$ that give zero, so any non-zero value of $\Fq$ can be obtained $q-1$ times.
  So, the equation $F_a(x)=\Tra(c)$ has exactly $q-1$ solutions.
\end{itemize}

\noindent We apply the automorphism (\ref{autom}) to the parabola $y=ax^2+c$
and we obtain (\ref{parc}). These are all distinct and different from those of Theorem~\ref{teo.intc0},
because the planar intersection of $\He$ and the previous parabolas are different. 
The number of values of $c$ such that $\Tra(c)\ne 0$ is exactly $q^2-q$,
but we must be careful and not count twice the same parabola.
In particular, if two parabolas share $a$ and $b$, then they are in the
same orbit if $\Tra(c)=\Tra(\bar c)$. So we must consider only one of these
for any non-zero value of $\Tra(c)$. These are $q-1$ of these values.\\
Summarizing, we have proved the following theorem:
\begin{theorem}\label{teo.intcn0}
The Hermitian curve $\He$ and the parabola $y=ax^2+c$ with $\Tra(c)\ne 0$ intersect in  either $q+1$ or $q-1$ points.\\ 
Moreover, from the application of (\ref{autom}) to these parabolas, we obtain:
\begin{description}
  \item[]$q^3(\frac{q}{2}-1)(q+1)(q-1)$ parabolas (with $\Tr{\Fq}{\F}{a^{q+1}}=0$)
  with $q+1$ points of intersection with $\He$.
  \item[]$q^3\frac{q}{2}(q+1)(q-1)$ parabolas (with $\Tr{\Fq}{\F}{a^{q+1}}=1$)
  with $q-1$ points of intersection with $\He$.
\end{description}
\end{theorem}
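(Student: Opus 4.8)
The plan is to treat the two assertions of the theorem — the size of the planar intersection and the number of parabolas — separately.

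For the intersection size I would substitute $y=ax^2+c$ into the equation of $\He$, reducing as in (\ref{eqiniziale}) to counting the roots in $\FQ$ of $F_a(x)=\Tra(c)$, with $F_a$ as in (\ref{prin}) and $\Tra(c)\in\Fq^*$. The key is Lemma \ref{lemmaFa}: writing each nonzero $x$ uniquely as $\alpha^j\beta^i$ ($0\le j\le q$, $0\le i\le q-2$), one has $F_a(\alpha^j\beta^i)=\beta^{2i}F_a(\alpha^j)$, and since $q$ is even $\beta^2$ is still primitive in $\Fq$. Hence for each $j$ with $F_a(\alpha^j)\ne 0$ the values $\{\beta^{2i}F_a(\alpha^j)\}_i$ run once through all of $\Fq^*$ (Lemma \ref{lem:abeven}), so the number of solutions of $F_a(x)=\Tra(c)$ equals the number of indices $j$ with $F_a(\alpha^j)\ne 0$. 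I would then import the zero-locus analysis already done for $y=ax^2$: the nonzero zeros of $F_a$ are governed by $z^2+z+a^{q+1}=0$ (with $z=a^qx^{q-1}$), which produces nonzero roots exactly when $\Tr{\Fq}{\F}{a^{q+1}}=1$. If $\Tr{\Fq}{\F}{a^{q+1}}=0$ then $x=0$ is the only zero of $F_a$, all $q+1$ indices $j$ survive, and there are $q+1$ solutions; if $\Tr{\Fq}{\F}{a^{q+1}}=1$ then the nonzero zeros fill exactly the two cosets $x^{q-1}=t_{1},t_{2}$ (Lemma \ref{sol}), i.e. precisely two indices $j$ drop out, leaving $q-1$ solutions.

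For the count of parabolas I would work with the orbits of (\ref{autom}), recalling the standing reduction of this subsection that every parabola is $\Gamma$-equivalent to one of the form $y=ax^2+c$. Applying (\ref{autom}) to $y=ax^2+c$ produces the parabolas (\ref{parc}); I would first check the orbit is maximal of size $q^3$, since equating the linear coefficients of two members gives $(\bar\gamma-\gamma)^q=2a(\bar\gamma-\gamma)=0$ in characteristic $2$, forcing $\bar\gamma=\gamma$ and then $\bar\delta=\delta$. Next I would isolate the automorphisms with $\gamma=0$: these are exactly $x\mapsto x$, $y\mapsto y+\delta$ with $\Tra(\delta)=0$, they send $y=ax^2+c$ to $y=ax^2+(c-\delta)$, and as $\delta$ ranges over the $q$ trace-zero elements they realize all $q$ constants of a fixed trace. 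Thus the ``$b=0$ slice'' of the orbit of $y=ax^2+c$ consists precisely of the $q$ parabolas $y=ax^2+c'$ with $\Tra(c')=\Tra(c)$, so the orbits through the representatives $y=ax^2+c$ are parametrized by $\Tra(c)\in\Fq$; in particular those with $\Tra(c)\ne0$ form $q-1$ pairwise disjoint orbits, each disjoint from the $\Tra(c)=0$ family of Theorem \ref{teo.intc0}, and (since the leading coefficient $a$ is fixed by (\ref{autom})) orbits with different $a$ are automatically disjoint.

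Finally I would multiply out. By the $y=ax^2$ analysis (Theorem \ref{teo.intc0}) there are $(\tfrac{q}{2}-1)(q+1)$ values of $a$ with $\Tr{\Fq}{\F}{a^{q+1}}=0$ and $\tfrac{q}{2}(q+1)$ with $\Tr{\Fq}{\F}{a^{q+1}}=1$; by the first part these give intersection $q+1$ and $q-1$ respectively. Multiplying by the $q-1$ orbits per $a$ and $q^3$ parabolas per orbit yields $q^3(\tfrac{q}{2}-1)(q+1)(q-1)$ and $q^3\tfrac{q}{2}(q+1)(q-1)$, as claimed. The main obstacle I anticipate is the bookkeeping in the second paragraph: proving cleanly that the orbit is exactly maximal and, above all, that the orbits attached to distinct nonzero traces genuinely partition the relevant parabolas without overlap and without double counting against Theorem \ref{teo.intc0}. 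Once Lemma \ref{lem:abeven} and the quadratic $z^2+z+a^{q+1}=0$ are in hand, the intersection count itself is routine.
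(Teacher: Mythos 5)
Your proposal is correct and follows essentially the same route as the paper: the intersection count via $F_a(x)=\Tra(c)$, Lemma~\ref{lem:abeven}, and the zero-count of $F_a$ imported from the $y=ax^2$ analysis of Theorem~\ref{teo.intc0}, then the parabola count via $q^3$-element orbits of (\ref{autom}) indexed by $a$ and the $q-1$ nonzero values of $\Tra(c)$. Your explicit verifications of orbit maximality in characteristic $2$ and of the $\gamma=0$ slice parametrization simply make precise the bookkeeping that the paper states tersely.
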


\noindent  By summing all parabolas that we have found in Theorem \ref{teo.intc0} and Theorem~\ref{teo.intcn0}, we obtain
{\small{$$
\begin{array}{l}
   q^3(q+1)\left(\frac{q}{2}-1+
\frac{q}{2}+(q-1)(\frac{q}{2}-1+\frac{q}{2})\right)=\\ 
=q^3(q+1)(q-1)(1+q-1)=
q^4(q^2-1).
\end{array}$$}}
Since this is exactly the total number of the parabolas, this means that we 
considered all parabolas, and so we obtain the second half of Theorem~\ref{teo.principe}.
\end{itemize}


\section{Applications to coding theory}
\label{codHer}
The results present in this paper do allow the explicit determination
of at least one weight for some Hermitian codes.\\

We consider a Hermitian code as a special case of affine-variety code.\\

Let $I=\langle y^q+y-x^{q+1},x^{q^2}-x,y^{q^2}-y\rangle\subset\FQ[x,y]$ and let
$R=\FQ[x,y]/I$.  Let $\mathcal{V}(I)=\{P_1,\dots,P_n\}$, where $n=q^3$.
We consider the \textit{evaluation map} defined as follows:
$$
\begin{array}{rcl}
  \phi:R & \longrightarrow & (\FQ)^n\\
  f & \longmapsto & (f(P_1),\dots,f(P_n)).
\end{array}    
$$
We take $L\subseteq R$ generated by
$$\Bmq=\{x^ry^s+I\mid qr+(q+1)s\leq m,\,\,0\leq s\leq q-1,\,\,0\leq r\leq q^2-1\},$$
where $m$ is an integer such that $0\leq m\leq q^3+q^2-q-2$. For simplicity, we also write
$x^ry^s$ for $x^ry^s+I$. 
We have the following affine-variety codes:
$C(I,L)=\mathrm{Span}_{\FQ}\langle\phi(\Bmq)\rangle$
and we denote by $C(m,q)=(C(I,L))^{\perp}$ its dual.
Then the affine-variety code $C(m,q)$ is called the \textit{Hermitian code} with parity-check matrix $H$.
$$
  H=\left(
    \begin{array}{ccc}
      f_1(P_1) & \dots & f_1(P_n)\\
      \vdots & \ddots & \vdots\\
      f_k(P_1) & \dots & f_k(P_n)\\
    \end{array}
  \right)\textrm{ where } \Bmq=\{f_1,\ldots,f_k\}.
$$
The Hermitian codes can be divided in four phases (\cite{CGC-cd-book-AG_HB}), any of them having specific explicit formulas linking their dimension and their distance (\cite{CGC-cd-phdthesis-marcolla}), as in Table~\ref{Tab}.\\

{\scriptsize{
\begin{table}[h]
\begin{center}
\begin{tabular}[h]{|cccc|}
\hline\rowcolor{lightgray}\noalign{\smallskip}
\textbf{Phase } & $\mathbf{m}$ & \textbf{Distance} $\mathbf{d}$ & \textbf{Dimension}\\
\noalign{\smallskip}
\hline
\noalign{\smallskip}
\textbf{1} & {\scriptsize$\begin{array}{c}
  0\leq m\leq q^2-2\\
   m=aq+b\\
  0\leq b\leq a \leq q-1\\
b\ne q-1
\end{array}$} & {\scriptsize$\begin{array}{ll}
  a+1 & a>b\\
  a+2 & a=b
\end{array}\iff d\leq q$} & {\scriptsize$
  q^3-\frac{a(a+1)}{2}-(b+1)$}\\
  & & &\\
\textbf{2} &  {\scriptsize$\begin{array}{c}
  q^2-1\leq m\leq 2q^2-2q-3\\
  m=2q^2-q-aq-b-3\\
  1\leq a\leq q-2\\
  0\leq b\leq q-2
\end{array}$} & {\scriptsize$\begin{array}{ll}
  (q-a)q-b-1 & a\leq b\\
(q-a)q & a > b
\end{array}$} & {\scriptsize$
   n-\frac{q(3q+1)}{2}+aq+b+2$}\\
  & & &\\
\textbf{3} & {\scriptsize$2q^2-2q-2\leq m \leq n-2$} & {\scriptsize$m-q^2+q+2$}& {\scriptsize$n-m+\frac{q(q-1)}{2}-1$}\\
  & & &\\
\textbf{4} & {\scriptsize$\begin{array}{c}
  n-1\leq m \leq n+q^2-q-2\\
  m=n+q^2-q-2-aq-b\\
0\leq b\leq a\leq q-2,
\end{array}$} & {\scriptsize$n-aq-b$}& {\scriptsize$\frac{a(a+1)}{2}+b+1$}\\
\hline
\end{tabular}
\end{center}
\caption{The four phases of Hermitian codes}\label{Tab}
\end{table}}}

In the remainder of this section we focus on the first phase.
This phase can be characterized by the condition $d\leq q$. First-phase Hermitian codes can be either \textit{edge codes} or \textit{corner codes}, as explained below.
\begin{definition}\label{Cornercode}
Let $2\leq d\leq q$ and let $1\leq j\leq d-1$.\\
Let $L_0^d=\{1,x,\dots,x^{d-2}\},L_1^d=\{y,xy,\dots,x^{d-3}y\},\ldots,L_{d-2}^d=\{y^{d-2}\}$.\\
Let $l_1^d=x^{d-1},\ldots,l_j^d=x^{d-j}y^{j-1}$.
\begin{itemize}
  \item[\ding{71}] If $\Bmq=L_0^d\sqcup\dots\sqcup L_{d-2}^d$, then we say that $C(m,q)$ is a
  \textbf{corner code} and we denote it by $\textsf{H}^{\,0}_d$.
  \item[\ding{71}] If $\Bmq=L_0^d\sqcup\dots\sqcup L_{d-2}^d \sqcup\{l_1^d,\ldots,l_j^d\}$,
  then we say that $C(m,q)$ is an \textbf{edge code} and we denote it by $\textsf{H}^{\,j}_d$.
\end{itemize}
\end{definition}
From the formulas in Table \ref{Tab} we have the following theorem.
\begin{theorem}
Let $2\leq d\leq q$, $1\leq j\leq d-1$. Then
$$d(\textsf{H}^{\,0}_d)=d(\textsf{H}^{\,j}_d)=d,\quad
\dim_{\FQ}(\textsf{H}^{\,0}_d)=n-\frac{d(d-1)}{2},\,\,
\dim_{\FQ}(\textsf{H}^{\,j}_d)=n-\frac{d(d-1)}{2}-j$$
\end{theorem}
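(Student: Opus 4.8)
The plan is to read both invariants off the monomial basis $\Bmq$ and the Phase~1 row of Table~\ref{Tab}. First I would settle the dimensions. The monomials $\{x^ry^s : 0\le r\le q^2-1,\,0\le s\le q-1\}$ reduce and span $R=\FQ[x,y]/I$ (using $y^q+y=x^{q+1}$ and $x^{q^2}=x$), so $\dim_{\FQ}R=q^3=n$; since $I$ is the radical ideal of the $n$ distinct points of $\mathcal V(I)$, the evaluation map $\phi$ is injective, indeed an isomorphism onto $(\FQ)^n$. Hence the evaluation vectors of any subset of these monomials are linearly independent, and in particular $\dim_{\FQ}C(I,L)=|\Bmq|$, whence $\dim_{\FQ}C(m,q)=n-|\Bmq|$ by duality. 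It then remains to count the basis: for the corner code $|L_i^d|=d-1-i$ for $0\le i\le d-2$, so $|\Bmq|=\sum_{i=0}^{d-2}(d-1-i)=\frac{d(d-1)}{2}$, giving $\dim_{\FQ}(\textsf{H}^{\,0}_d)=n-\frac{d(d-1)}{2}$; the edge code adjoins the $j$ monomials $l_1^d,\dots,l_j^d$, so $\dim_{\FQ}(\textsf{H}^{\,j}_d)=n-\frac{d(d-1)}{2}-j$.

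For the distances I would identify, for each code, an integer $m$ producing exactly the prescribed basis, and then invoke Table~\ref{Tab}. The key observation is that on the admissible range the weight $qr+(q+1)s=q(r+s)+s$ takes pairwise distinct values and orders the monomials by increasing $r+s$, and, within a fixed $r+s$, by increasing $s$. Thus the corner basis $\{x^ry^s:r+s\le d-2\}$ is closed off precisely by its heaviest monomial $y^{d-2}$, of weight $(q+1)(d-2)$; taking $m=(q+1)(d-2)=q(d-2)+(d-2)$ gives $a=b=d-2$, which satisfies $0\le b\le a\le q-1$ and $b\ne q-1$ because $d\le q$. The case $a=b$ of Table~\ref{Tab} then yields $d(\textsf{H}^{\,0}_d)=a+2=d$. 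For the edge code the heaviest monomial is $l_j^d=x^{d-j}y^{j-1}$, of weight $q(d-1)+(j-1)$; taking $m=q(d-1)+(j-1)$ gives $a=d-1$ and $b=j-1$, so $a>b$ since $j\le d-1$, and the corresponding case of Table~\ref{Tab} gives $d(\textsf{H}^{\,j}_d)=a+1=d$.

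The main obstacle is the bookkeeping of the second paragraph: one must check that the chosen $m$ reproduces exactly the listed basis, neither one monomial more nor fewer. This rests on the injectivity of the weight map together with the inequality $q(d-1)>(q+1)(d-2)$, valid for $d\le q$, which guarantees that $x^{d-1}$ is strictly heavier than every monomial of the corner basis, so that the corner/edge boundary sits precisely where Definition~\ref{Cornercode} places it. Once the pairs $(a,b)$ are pinned down and verified to lie in the Phase~1 parameter region, in particular avoiding the excluded value $b=q-1$ (which the hypothesis $d\le q$ rules out in both cases), the two distance identities follow immediately from Table~\ref{Tab}, completing the proof.
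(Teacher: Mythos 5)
Your proposal is correct and follows essentially the same route as the paper: the paper's proof is precisely to recognize each code $\textsf{H}^{\,0}_d$, $\textsf{H}^{\,j}_d$ as $C(m,q)$ for the values $m=(q+1)(d-2)$ resp.\ $m=q(d-1)+(j-1)$ and read off distance and dimension from the Phase~1 formulas of Table~\ref{Tab}, which is exactly your second and third paragraphs (the paper merely leaves the bookkeeping with $(a,b)$ implicit). Your first paragraph, deriving the dimension directly from the injectivity of $\phi$ and a monomial count rather than from the table, is a small self-contained variant of the same argument and matches the paper's remark that all $\phi(x^ry^s)$ are linearly independent.
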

\noindent In other words, all $\phi(x^ry^s)$ are linearly independent (i.e. $H$ has maximal rank)
and for any distance $d$ there are exactly $d$ Hermitian codes (one corner code and $d-1$ edge codes).\\

A result present in \cite{CGC-cod-art-marcolla2012hermitian} related to intersection between parabola and $\He$ is in the following theorem.

\begin{theorem}
The number of words of weight $4$ of a corner code $\textsf{H}^{\,0}_3$ is:
{\small{$$A_4=\frac{1}{4}\left(\binom{q^3}{3}(q+1)-q^2\binom{q+1}{3}
(3q^3+2q^2-8)\right)(q-1)(q^3-3).$$}}
\noindent The number of words of weight $4$ of an edge code $\textsf{H}^{\,1}_3$ is:
{\small{$$A_{4}=q^2\binom{q}{4}(q^4-4q^2+3)+\frac{q^4(q^2-1)^2(q-1)^2}{8}+
(q^2-1)\sum_{k=4}^{2q}N_k\binom{k}{4}.$$}}

\noindent Where $N_k$ is the number of \textbf{parabolas} and non-vertical lines that intersect $\He$ in exactly $k$ points.\\
The number of words of weight $4$ of an edge code $\textsf{H}^{\,2}_3$ is:
{\small{$$A_{4}=q^2(q-1)\binom{q+1}{4}(2q^3-3q^2-4q+9).$$}}%
\begin{proof}
See Theorem 4.13 of \cite{CGC-cod-art-marcolla2012hermitian}.
\end{proof}
\end{theorem}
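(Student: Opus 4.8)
The plan is to read every codeword of the dual code $C(m,q)=(C(I,L))^{\perp}$ as a vector $c\in(\FQ)^n$ in the kernel of the parity-check matrix $H$, whose column indexed by an affine point $P=(x,y)\in\He$ is the evaluation vector of the monomials of $\Bmq$ at $P$. A codeword of weight $4$ is then exactly a linear dependence among four such columns in which all four coefficients are nonzero, so the number of weight-$4$ words equals $\sum_{S}(\text{number of full-support kernel vectors supported on }S)$, the sum running over the $4$-subsets $S$ of $\He$-points. I would therefore (i) classify the $4$-subsets according to the matroid of the columns, (ii) decide for each type how many full-support dependences it carries, and (iii) count how many subsets of each type occur. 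Because the column of $P$ records the values of the basis functions, the dependences are controlled by the curves of the linear system $\langle\Bmq\rangle$ passing through the chosen points, so it is the geometry of $\He$ against that system that drives the count.

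For the corner code $\textsf{H}^{\,0}_3$ the basis is $\{1,x,y\}$, the columns live in $(\FQ)^3$, and a triple of columns is dependent precisely when the three points are collinear on an arbitrary affine line. I would split an arbitrary $4$-subset into three types: (a) no three collinear, a $4$-circuit carrying a unique dependence up to scalars, hence $q^2-1$ words; (b) exactly one collinear triple, whose only dependence is supported on that triple, hence no weight-$4$ word; (c) four collinear points, where the kernel is the $[4,2,3]$ MDS code $\{c:\sum c_i=\sum c_i x_i=0\}$, giving $(q^2-1)(q^2-3)$ words by the MDS weight count. To count types (a) and (c) I would invoke the classical intersection pattern of the Hermitian unital: the $q^2$ vertical lines meet $\He$ in $q$ points each, the $q^3(q-1)$ non-vertical secants in $q+1$ points each, and the $q^3$ non-vertical tangents in one point each. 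Writing $\lambda_\ell=|\ell\cap\He|$, this yields $\sum_\ell\binom{\lambda_\ell}{3}$ and $N_{\mathrm{coll}}=\sum_\ell\binom{\lambda_\ell}{4}$ and, by inclusion--exclusion over collinear triples, the number $M$ of subsets with no three collinear; substituting into $(q^2-1)M+(q^2-1)(q^2-3)N_{\mathrm{coll}}$ gives the stated closed form.

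For $\textsf{H}^{\,1}_3$ (basis $\{1,x,y,x^2\}$) and $\textsf{H}^{\,2}_3$ (basis $\{1,x,y,x^2,xy\}$) the key preliminary fact is that three points are dependent if and only if they share the same $x$-coordinate, i.e. lie on one vertical line (a Vandermonde argument on the columns $1,x,x^2$). I would then run the case analysis on the multiplicities of the $x$-coordinates. For $\textsf{H}^{\,1}_3$ exactly three types contribute: four points on a common parabola or non-vertical line (a $4$-circuit, yielding the term $(q^2-1)\sum_k N_k\binom{k}{4}$ through the intersection numbers $N_k$ supplied by Theorem~\ref{teo.principe}); two pairs on two distinct vertical lines (again a $4$-circuit, yielding $\tfrac18 q^4(q^2-1)^2(q-1)^2$ after counting $\binom{q^2}{2}\binom{q}{2}^2$ subsets); and four points on one vertical line (the same $[4,2,3]$ MDS situation, yielding $q^2\binom{q}{4}(q^4-4q^2+3)$). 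For $\textsf{H}^{\,2}_3$ the extra coordinate $xy$ destroys the parabola and the two-vertical-line dependences—this is where a Vandermonde/degree argument shows that four points with distinct $x$ are dependent only when collinear on a non-vertical line—so only two types survive: four points on a non-vertical secant ($q^3(q-1)\binom{q+1}{4}$ circuits, $q^2-1$ words each) and four points on one vertical line ($q^2\binom{q}{4}$ subsets, $(q^2-1)(q^2-3)$ words each), and these consolidate into the single product $q^2(q-1)\binom{q+1}{4}(2q^3-3q^2-4q+9)$.

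The main obstacle is the combinatorial-geometric bookkeeping rather than a single hard estimate: I must pin down the column matroid of each system exactly, in particular ruling out spurious dependences (the two-pairs and mixed $x$-coordinate configurations for $\textsf{H}^{\,2}_3$) by the Vandermonde/degree argument, and I must evaluate the number of full-support kernel vectors for the degenerate collinear configurations, which is the weight enumerator of a $[4,2,3]$ MDS code over $\FQ$. Granting the classical line--unital intersection distribution and the parabola counts $N_k$ of Theorem~\ref{teo.principe}, the remaining step is the routine but lengthy algebraic simplification of these sums into the three displayed closed forms.
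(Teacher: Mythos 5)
The paper offers no internal proof of this theorem: its ``proof'' is the single citation to Theorem~4.13 of \cite{CGC-cod-art-marcolla2012hermitian}, so there is no in-paper argument to compare yours against. Judged on its own, your proposal is correct, and it is an actual proof strategy rather than a deferral.

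Your route --- reading weight-$4$ words of $C(m,q)=(C(I,L))^{\perp}$ as full-support dependences among columns of $H$, classifying $4$-subsets of points of $\He$ by the matroid of the evaluation columns, and weighting each class by its number of full-support kernel vectors --- does reproduce all three closed forms, which I checked. For $\textsf{H}^{\,0}_3$, the line--unital distribution ($q^2$ vertical $q$-secants, $q^3$ tangents, $q^3(q-1)$ non-vertical $(q+1)$-secants) gives $T=q^2\binom{q}{3}+q^3(q-1)\binom{q+1}{3}=\frac{q^3(q-1)(q^3-2)}{6}$ collinear triples and $N_{\mathrm{coll}}=q^2\binom{q}{4}+q^3(q-1)\binom{q+1}{4}=\frac{q^3(q-1)(q-2)(q^3-3)}{24}$ collinear quadruples, and your count $(q^2-1)\bigl[\binom{q^3}{4}-T(q^3-3)+3N_{\mathrm{coll}}+(q^2-3)N_{\mathrm{coll}}\bigr]$ simplifies to $\frac{(q^2-1)q^3(q^3-3)}{24}\,(q^6-3q^4-2q^3+2q^2+8q-6)$, which is exactly the displayed corner-code expression. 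For $\textsf{H}^{\,1}_3$ your three configuration types match the three displayed terms one-to-one (in particular $\binom{q^2}{2}\binom{q}{2}^2(q^2-1)=\frac{q^4(q^2-1)^2(q-1)^2}{8}$), and for $\textsf{H}^{\,2}_3$ the two surviving types sum, after the factorization you call routine, to $q^2(q-1)\binom{q+1}{4}(2q^3-3q^2-4q+9)$. Two small points should be made explicit in a write-up. First, for four collinear points the degenerate kernel must be parametrized by the coordinate that varies along the line: on a vertical line all $x_i$ coincide, so the kernel is $\{c:\sum c_i=\sum c_iy_i=0\}$; as literally written, $\{c:\sum c_i=\sum c_ix_i=0\}$ would there be the $[4,3,2]$ code, not the $[4,2,3]$ MDS code (your count $(q^2-1)(q^2-3)$ is nevertheless the right one in both cases). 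Second, the identification of the $\textsf{H}^{\,1}_3$ term $(q^2-1)\sum_k N_k\binom{k}{4}$ needs the remark that four points with distinct abscissas lie on at most one curve $y=\lambda x^2+\mu x+\nu$ (uniqueness of the degree-$\le 2$ interpolant), so no quadruple is counted twice. What your approach buys over the paper's bare citation is self-containedness, and it makes transparent exactly where Theorem~\ref{teo.principe} enters: only through the numbers $N_k$ in the $\textsf{H}^{\,1}_3$ formula, the other two formulas requiring nothing beyond the classical line--unital intersection pattern.
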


Since the intersections between $\He$ and the lines are easy to compute, 
to determine $N_k$ in previous theorem it is enough to apply Theorem~\ref{teo.principe}.


\section{Conclusions and open problems}
\label{conc}
Apparently, there are two  natural generalizations of our work:
\begin{itemize}
\item
The first is to investigate the planar intersection of $\He$ with other conics.
Unfortunately, this is not so easy as it seems.
The case with parabolas is manageable because they intersect the curve in a
tangency double point at infinity. In the general conics case, we will not have
this help from the geometry and we will not be able to replicate
many explicit computations we have done in our lemmas.
Therefore, if someone wants to investigate the general case, he will need some
extra (non-trivial) ideas.
 \item
The second is to investigate the planar intersections of parabolas and
other curves.
The natural candidates are the norm-trace curves \cite{CGC-cd-art-Geil-1}, which share many properties with the Hermitian curve.
\end{itemize}
In both cases, the investigation is not only interesting in itself,
but it is likely to shed light on the weight distribution of some affine-variety codes.


\section*{Acknowledgements}
\label{ack}

This work was partially presented in 2012 at the PhD School on \Gr\
bases, curves, codes and cryptography (Trento) and in 2013 at Effective Methods in Algebraic Geometry, MEGA \cite{CGC-alg-talk-parabole}.\\

Previous results were present in the first author's PhD thesis \cite{CGC-cd-phdthesis-marcolla}.\\

The first two authors would like to thank their supervisor, the third author.\\

For interesting discussions, the authors would like to thank: M. Giulietti, T. Mora and M. Pizzato.\\
%


\bibliography{RefsCGC}

\end{document}